\tikzset{vtx/.style={circle, fill, inner sep=1.5pt}}
\def\eps{\varepsilon}
\def\cF{\mathcal{F}}
\def\cG{\mathcal{G}}
\def\cH{\mathcal{H}}
\def\cS{\mathcal{S}}
\theoremstyle{plain}
\newtheorem*{theorem*}{Theorem}
\newtheorem{theorem}{Theorem}[section]		
\newtheorem{lemma}[theorem]{Lemma}
\newtheorem{conjecture}[theorem]{Conjecture}
\newtheorem{definition}[theorem]{Definition}
\theoremstyle{remark}
\begin{document}

\begin{frontmatter}[classification=text]

\title{Power saving for the Brown-Erd\H os-S\'os problem} 

\author[oliver]{Oliver Janzer}
\author[abhishek]{Abhishek Methuku\thanks{Research supported by the UIUC Campus Research Board Award RB25050.}}
\author[aleksa]{Aleksa Milojevi\'c\thanks{Research supported by SNSF grant 200021-228014.}}
\author[benny]{Benny Sudakov\footnotemark[2]}

\begin{abstract}
Let $f(n, v, e)$ denote the maximum number of edges in a 3-uniform hypergraph on $n$ vertices which does not contain $v$ vertices spanning at least $e$ edges. A central problem in extremal combinatorics, famously posed by Brown, Erd\H{o}s and S\'os in 1973, asks whether $f(n, e+3, e)=o(n^2)$ for every $e \ge 3$. A classical result of S\'ark\"ozy and Selkow states that $f(n, e+\lfloor \log_2 e\rfloor+2, e)=o(n^{2})$ for every $e \ge 3$. This result was recently improved by Conlon, Gishboliner, Levanzov and Shapira.

Motivated by applications to other problems, Gowers and Long made the striking conjecture that $f(n, e+4, e)=O(n^{2-\eps})$ for some $\eps=\eps(e)>0$.  Conlon, Gishboliner, Levanzov and Shapira, and later, Shapira and Tyomkyn reiterated the following approximate version of this problem. What is the smallest $d(e)$ for which $f(n, e+d(e), e)=O(n^{2-\eps})$ for some $\eps=\eps(e)>0$? In this paper, we prove that for each $e\geq 3$ we have $f(n, e+\lfloor \log_2 e\rfloor +38, e)=O(n^{2-\eps})$ for some $\eps>0$. This shows that one can already obtain power saving near the S\'ark\"ozy-Selkow bound at the cost of a small additive constant.
\end{abstract}
\end{frontmatter}

\section{Introduction}

Much of extremal combinatorics is concerned with determining which global properties force the appearance of certain local substructures. One of the most important questions of this kind is the \emph{Tur\'an problem}, which asks how many edges an $n$-vertex graph (or hypergraph) can have without containing a given graph (or hypergraph) as a subgraph. The subject of this paper is another very central and closely related problem. A \emph{$(v,e)$-configuration} is a hypergraph with at most $v$ vertices and at least $e$ edges. Estimating the maximum number of edges an $n$-vertex $r$-uniform hypergraph can have without containing a $(v,e)$-configuration is an important topic in extremal graph theory. For example, when $e=\binom{v}{r}$, the problem becomes equivalent to determining the maximum possible number of edges without containing $K_v^{(r)}$, which is a notoriously difficult open problem for $r>2$.

Let us write $f(n, v, e)$ for the maximum number of edges in a $3$-uniform hypergraph on $n$ vertices which does not contain a $(v, e)$-configuration. In 1973, Brown, Erd\H os and S\' os \cite{BES73,BES73_2} initiated the study of this function for various values of $v$ and $e$. The following conjecture, named after them, is one of the most famous open problems in extremal combinatorics.

\begin{conjecture}[Brown--Erd\H os--S\'os conjecture~\cite{BES73,BES73_2}] \label{conj:BESConj}
    For every $e\geq 3$, we have $f(n,e+3,e)=o(n^2)$.
\end{conjecture}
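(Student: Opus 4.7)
The plan is to attack Conjecture~\ref{conj:BESConj} by combining a refined greedy extension with structural information extracted from link graphs, though the central obstacle -- bridging the additive $\log_2 e$ gap -- is precisely what has kept the conjecture open for fifty years, and I would not expect a short proof. Suppose for contradiction that $H$ is a $3$-uniform hypergraph on $n$ vertices with at least $cn^2$ edges that is free of $(e+3,e)$-configurations. Standard averaging produces many pairs $\{u,v\}$ with $|N(u,v)|=\Omega(1)$ and a positive fraction of vertices $x$ whose link graph $L_x$ has density $\Omega(1)$ on a linear-sized subset. As a baseline I would run the S\'ark\"ozy--Selkow greedy extension: iteratively build $V_3\subset V_4\subset\dots$ with $|V_k|=k$ and $|E(H[V_k])|\ge 2^{k-3}$ by choosing each new vertex to double the current edge count. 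This reproduces the classical bound $f(n,e+\lfloor \log_2 e\rfloor+2,e)=o(n^2)$.

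To close the gap from $e+\lfloor \log_2 e\rfloor+O(1)$ down to $e+3$, each newly adjoined vertex must supply on average a constant fraction of $e$ new edges rather than merely double the running total. The strategy I would pursue is to locate three outside vertices $v_1,v_2,v_3$ whose link graphs are simultaneously dense on a common set $S$ of size $e$, so that $S\cup\{v_1,v_2,v_3\}$ spans at least $e$ edges. Concretely, I would first apply Cauchy--Schwarz on the codegree function to find a large set $T$ on which many vertices have nearly full link, then use dependent random choice to select $S\subset T$ of size $e$ on which many outside vertices have link density close to $1$, and finally pick three such vertices subject to the required lower bound on the number of $H$-edges inside $S\cup\{v_1,v_2,v_3\}$, using supersaturation inside the pair-link to ensure the three chosen vertices contribute enough distinct edges.

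The hard part -- and the reason the conjecture has resisted attack since 1973 -- is the synchronization step: showing that three distinct outside vertices can simultaneously contribute a combined total of $\ge e$ edges to a common $e$-vertex set, without inflating the vertex count. Every known argument seems to lose at least a $\log e$ factor somewhere in this step, either in the dependent random choice when lower-bounding the joint density of the three link graphs, or in the codegree averaging that feeds into it. This is exactly why S\'ark\"ozy--Selkow stood unchallenged for decades, and why even the result cited in the abstract concedes an additive $\log_2 e$. Overcoming this barrier seems to require either a density-increment argument operating directly at the link-graph level, or a genuinely new additive-combinatorial input tailored to $3$-uniform hypergraphs; absent such a tool, I would expect this plan to stall at the S\'ark\"ozy--Selkow bound.
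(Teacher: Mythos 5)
You have not proved Conjecture~\ref{conj:BESConj}, and neither does the paper: the statement you were given is an \emph{open conjecture}, stated in the paper precisely because it remains unresolved for every $e\geq 4$ (the only settled case, $e=3$, is the Ruzsa--Szemer\'edi $(6,3)$-theorem). What the paper actually proves is the much weaker Theorem~\ref{thm:power saving}, namely $f(n,e+\lfloor\log_2 e\rfloor+38,e)=O(n^{2-\eps})$, which trades the target $e+3$ for $e+\lfloor\log_2 e\rfloor+38$ in exchange for a polynomial (rather than $o(n^2)$) bound. So there is nothing in the paper for your argument to be measured against as a ``proof of the conjecture.''

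Your proposal is also, by your own admission, not a proof. The first part correctly recounts the greedy doubling argument of S\'ark\"ozy--Selkow, which yields $f(n,e+\lfloor\log_2 e\rfloor+2,e)=o(n^2)$ but cannot do better: doubling the edge count at each step intrinsically costs one extra vertex per doubling, i.e.\ an additive $\log_2 e$. The second part (finding a common $e$-set $S$ on which three outside vertices each have near-complete link) is a wish, not an argument: you have not produced any estimate showing that such a configuration exists in a hypergraph with $cn^2$ edges and no $(e+3,e)$-configuration, and you explicitly concede that every instantiation of dependent random choice or codegree averaging you know loses a $\log e$ factor at exactly this step. That concession is the gap, and it is the gap. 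Absent a concrete lemma replacing ``three vertices with synchronized dense links,'' the proposal stalls at the classical bound. For comparison, the paper's actual strategy sidesteps the three-vertex synchronization entirely: it builds a sequence of eligible hypergraphs $F_0,\dots,F_\ell$ by gluing two copies of $F_j$ along a carefully chosen good independent set of size $\Delta(F_j)-1$ (Lemma~\ref{lemma:maintaining eligibility}), uses a sunflower dichotomy (Lemma~\ref{lemma:iteration step}) to either continue the gluing or harvest a low-deficiency sunflower (Lemma~\ref{lemma:sunflower}), and accepts the resulting $\log_2 e$ loss in deficiency as the price of obtaining a polynomial saving.
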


The first result in this direction was the famous $(6, 3)$-theorem shown by Ruzsa and Szemer\'edi \cite{RS76}, who proved that $f(n, 6, 3)=o(n^2)$, thus establishing Conjecture~\ref{conj:BESConj} in the case $e=3$. 
This result has important implications in number theory. For instance, it implies Roth's theorem which states that any dense subset of first $n$ integers contains a three term arithmetic progression.
To this day, $e=3$ is the only instance of the Brown-Erd\H{o}s-S\'os conjecture which has been resolved. 

Therefore, a large amount of work has been put into proving approximate versions of Conjecture~\ref{conj:BESConj}. For instance, a classical result of S\'ark\"ozy and Selkow \cite{SS04} from 2004 is the following.
\begin{theorem}[S\'ark\"ozy and Selkow~\cite{SS04}]
For every $e \ge 3$, we have  $f(n, e+\lfloor \log_2 e\rfloor +2, e)=o(n^2)$.
\end{theorem}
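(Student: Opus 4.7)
The natural strategy is induction on $e$, with base case $e = 3$ supplied by the Ruzsa--Szemer\'edi $(6,3)$-theorem. Let $g(e) = e + \lfloor \log_2 e \rfloor + 2$ and assume $f(n, g(e'), e') = o(n^2)$ for all $e' < e$. The arithmetic identity $g(2e') = 2g(e') - (\lfloor \log_2 e' \rfloor + 1)$ suggests a \emph{doubling} approach: produce a $(g(2e'), 2e')$-configuration by gluing two $(g(e'), e')$-configurations along a shared vertex set of size $\lfloor \log_2 e' \rfloor + 1$. The easier case where $e$ is not a power of two has $g(e) = g(e-1)+1$, so it suffices to extend a $(g(e-1), e-1)$-configuration by a single edge adding at most one new vertex; the genuine difficulty is therefore concentrated at the doubling steps.

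The plan has three stages. First, I would strengthen the inductive hypothesis to a \emph{supersaturation statement}: any $H$ on $n$ vertices with at least $\varepsilon n^2$ edges contains $\Omega(n^{g(e')})$ distinct $(g(e'), e')$-configurations. This follows by a standard deletion argument --- if the number of configurations were subpolynomial, one could delete one representative edge per configuration, destroying them all while keeping $\Omega(n^2)$ edges, which would contradict the inductive hypothesis. Second, I would tag each configuration by a distinguished vertex set of size $\lfloor \log_2 e' \rfloor + 1$ that it contains, and by pigeonhole locate a specific tag $T$ lying in many configurations. Third, picking two such configurations and gluing them along $T$ would yield $2e'$ edges on at most $g(2e')$ vertices, i.e. the desired $(g(2e'), 2e')$-configuration.

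I expect the main obstacle to be arranging \emph{edge-disjointness} of the two glued configurations while also forcing a large vertex overlap. Two configurations sharing a vertex set $T$ could a priori also share edges supported on $T$, in which case the union has strictly fewer than $2e'$ edges and the argument breaks down. To circumvent this, I would introduce a cleanup step: remove any edge of $H$ that lies in too many configurations of the family, and then apply the pigeonhole to the pruned family. Balancing the removal threshold so that the trimmed family still contains polynomially many configurations, while simultaneously guaranteeing that no edge has too large multiplicity, will require a careful averaging argument, and this is where I expect most of the technical effort to concentrate.
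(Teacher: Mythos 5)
The paper cites the S\'ark\"ozy--Selkow theorem without proof, so there is no in-paper proof to compare against. What the paper \emph{does} prove, Theorem~\ref{thm:power saving}, uses a doubling-and-gluing strategy that is conceptually close to what you propose, and the same obstacles arise in both settings; I will therefore assess your plan on its merits and against the machinery the paper develops for its power-saving variant.

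The supersaturation step is not justified by the argument you give. If the number of $(g(e'),e')$-configurations in a dense $\cH$ were, say, $n^{g(e')-1}$, that is already vastly larger than $n^2$, so ``deleting one representative edge per configuration'' would exhaust the edge set many times over; the deletion argument only rules out $o(n^2)$ configurations, which is far weaker than the $\Omega(n^{g(e')})$ you need for the subsequent pigeonhole over tags of size $\lfloor\log_2 e'\rfloor+1$. The correct route is the standard random-subset averaging argument (pass to random vertex subsets of fixed size $m$ large enough that $f(m,v,e)<\varepsilon m^2/2$, count configuration--subset incidences), which is how one upgrades $f(n,v,e)=o(n^2)$ to a counting statement. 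This is fixable, but as written the step is wrong.

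The more fundamental gap is in the gluing step, and it is precisely the issue the paper's framework of \emph{good sets} and \emph{eligible hypergraphs} (Definitions~\ref{defn:good} and \ref{defn:eligible}) is designed to handle. Tagging each configuration by an arbitrary $(\lfloor\log_2 e'\rfloor+1)$-set $T$ and pigeonholing does not suffice, because for the union of two configurations sharing $T$ to be a $(g(2e'),2e')$-configuration you need simultaneously: (a) $T$ to be \emph{independent} in both copies (otherwise shared edges lower the edge count below $2e'$); (b) the two copies to overlap on \emph{exactly} $T$ and not on a larger set (the typical failure mode is an accidental extra shared vertex); and (c) when (b) fails and the overlap is some $U\supsetneq T$, the deficiency $\Delta(U)$ must be at least $\Delta(F)$, so that the merged object still has the right vertex-to-edge budget --- this is exactly the ``good set'' property, and without it the recursion does not close. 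Your proposed cleanup --- trimming edges of high multiplicity --- targets edge reuse, but the real danger is excess \emph{vertex} overlap, which it does not control. The paper handles (b)--(c) via a random partition of $V(\cH)$ into $v(F)$ classes (so overlaps can only occur between corresponding vertices) followed by a dichotomy between clean gluings and sunflower-type structures (Lemma~\ref{lemma:iteration step}); some device of this kind is essential. Finally, your treatment of the non-power-of-two step --- ``add a single edge contributing at most one new vertex'' --- is not obviously achievable and is not argued; a new edge generically adds up to three vertices, and nothing in the setup forces an extendable edge to exist.
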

For many years this result has been the state-of-the-art before Solymosi and Solymosi \cite{SS17} showed that $f(n, 14, 10)=o(n^2)$, thus improving the S\'ark\"ozy-Selkow theorem for $e=10$ which only gave $f(n, 15, 10)=o(n^2)$. In a recent breakthrough, Conlon, Gishboliner, Levanzov and Shapira~\cite{CGLS23} extended the ideas of Solymosi and Solymosi significantly to show that $f(n, e+O(\log e/\log\log e), e)=o(n^2)$. 

An important feature of all of the above proofs is their heavy use of the regularity lemma, both in the graph and the hypergraph setting. This means that the bounds they obtain on $f(n, v, e)$ are barely below quadratic. 

Motivated by various applications, Gowers and Long~\cite{GL21} asked to obtain a power-type improvement for the Brown--Erd\H os--S\'os problem.
They had made the surprising conjecture that $f(n, e+4, e)=O(n^{2-\eps})$ for all $e\geq 3$ and some $\eps>0$ potentially depending on $e$. 
They showed that even proving the special case $f(n,9,5)=O(n^{2-\eps})$ of this conjecture would answer a question of Ruzsa~\cite{ruzsa1993solving} on sets of integers avoiding solutions to a certain linear equation.
One cannot expect to strengthen Conjecture~\ref{conj:BESConj} to state $f(n, e+3, e)=O(n^{2-\eps})$ since there are $n$-vertex $3$-uniform hypergraphs with $n^{2-o(1)}$ edges avoiding $(e+3, e)$-configurations, for $e\in \{3, 4, 5, 7, 8\}$. More precisely, Ruzsa and Szemer\'edi~\cite{RS76} constructed a graph with $n$ vertices and $n^{2-o(1)}$ edges avoiding $(6, 3)$-configurations. Moreover, it is not hard to see that every $(7, 4)$- and $(8, 5)$-configuration contains a $(6, 3)$-configuration as a subhypergraph, and hence the Ruzsa-Szemer\'edi construction also shows that $f(n, 7, 4)\geq n^{2-o(1)}$ and $f(n, 8, 5)\geq n^{2-o(1)}$. Recently, lower bounds of the same type for $f(n, 10, 7)$ and $f(n, 11, 8)$ were also found by Ge and Shangguan~\cite{GS21}. Hence, in some sense, the conjecture of Gowers and Long is the strongest possible.

Conlon, Gishboliner, Levanzov and Shapira~\cite{CGLS23}, and later, Shapira and Tyomkyn~\cite{ST23} reiterated the following approximate version of the Gowers-Long conjecture. 
What is the smallest function $d=d(e)$ for which $f(n, e+d, e)=O(n^{2-\eps})$ for some $\eps = \eps(e)>0$? In this paper we prove the following, which can be thought of as a version of the S\'ark\"ozy-Selkow theorem with power saving.

\begin{theorem}\label{thm:power saving}
For every $e \ge 3$, there exists some $\eps>0$ such that $f(n, e+\lfloor \log_2 e\rfloor+38, e)=O(n^{2-\eps})$.
\end{theorem}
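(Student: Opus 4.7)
The strategy is to refine the doubling argument of S\'ark\"ozy and Selkow so that it yields a polynomial bound at the cost of only a constant additive number of extra vertices in the final configuration. The original proof iteratively grows a configuration by doubling the number of edges at each step while adding just one new vertex per doubling round, producing $e + \lfloor \log_2 e \rfloor + 2$ vertices in total; however, it uses the density hypothesis $e(H) = \omega(n^2)$ only in a qualitative way. My plan is to show that the stronger hypothesis $e(H) \geq C n^{2-\eps}$ enables each doubling step to be executed with a polynomial density margin, producing the desired power saving, while the $+36$ slack is spent on the technical overhead needed to enforce this margin.

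First, I would perform a cleaning step: starting from $H$ with $e(H) \geq C n^{2-\eps}$ for a sufficiently small $\eps > 0$ and a sufficiently large constant $C = C(e)$, pass to a subhypergraph with near-regular vertex degrees of order $n^{1-\eps}$ and pair-codegrees bounded from above by $n^{\eps'}$ for some $\eps' = o(\eps)$. Next, using a variant of dependent random choice applied to the link graphs $L_v = \{\{x,y\} : \{v,x,y\} \in E(H)\}$, I would locate a substantial vertex set $W$ in which every small subset has $\Omega(n^{\eps_0})$ common co-neighbors for some constant $\eps_0 > 0$; informally, $W$ is a collection of vertices whose links are highly correlated. Inside $W$, a doubling step can always be executed by adding a single new vertex, since the lower bound on common codegrees guarantees many new edges per step. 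I would run this doubling procedure for $\lfloor \log_2 e \rfloor$ rounds, obtaining $e$ edges. A constant-size ``buffer'' of vertices, reserved outside the main doubling chain, absorbs the overhead from seed construction, rounding losses during doubling, and occasional substitutions when a codegree happens to be smaller than expected.

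The main obstacle will be producing the structured set $W$ with both polynomial size and a polynomial codegree lower bound simultaneously: naive dependent random choice tends to lose polynomial factors, and balancing the size vs.\ codegree trade-off so that the doubling can run for all $\lfloor \log_2 e \rfloor$ rounds requires a careful, possibly iterated, sampling scheme. A secondary challenge is verifying that the overhead consumed by cleaning, the dependent-random-choice step, seed construction, and degenerate-doubling substitutions sums to at most the absolute constant $36$, rather than a quantity growing with $e$; this demands sharp accounting throughout the argument, since any step whose vertex cost depends on $e$ would immediately inflate the additive constant beyond what is claimed.
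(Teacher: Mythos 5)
Your proposal diverges substantially from the paper's proof and contains several gaps that would prevent it from working as described.

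The most serious issue is the dependent random choice step applied to link graphs. After the standard reduction, we may assume $\cH$ is linear, which forces every link $L_v=\{\{x,y\}:\{v,x,y\}\in E(\cH)\}$ to be a \emph{matching}. Under linearity, a pair $\{x,y\}$ lies in at most one edge of $\cH$, so it appears in at most one link; hence the notion of a set $W$ in which ``every small subset has $\Omega(n^{\eps_0})$ common co-neighbors'' in the links cannot be realized in the relevant sense. The doubling you envision--adding a single new vertex per round to double the number of edges--is also not the mechanism that makes the S\'ark\"ozy--Selkow argument go through. Both there and in the present paper, each doubling step takes two entire copies of an intermediate hypergraph $F_j$ and glues them along an independent set of size $\Delta(F_j)-1$; this doubles the number of edges while increasing the deficiency by one, but it adds roughly $v(F_j)$ fresh vertices rather than one.

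The paper's proof instead hinges on a structural dichotomy that is absent from your plan. It constructs a sequence of \emph{eligible} hypergraphs $F_0,\dots,F_\ell$, where eligibility packages the existence of two disjoint ``good sets'' (independent sets $A$ with $\Delta(U)\geq|A|+1$ for every $U\supsetneq A$), a surplus of degree-one vertices, and a lower bound $\Delta(U)\geq 2$ for all $|U|\geq 2$. Lemma~\ref{lemma:maintaining eligibility} shows this is preserved under gluing. Then Lemma~\ref{lemma:iteration step} uses the Erd\H{o}s--Rado sunflower lemma to prove that, given $\Omega(n^{k-\eps})$ copies of $F_j$, either two copies can be glued nondegenerately to produce $\Omega(n^{k+1-2\eps})$ copies of $F_{j+1}$, or there is a sunflower of copies of $F_j$ overlapping on a set of deficiency $\geq\Delta(F_j)$, which (after deleting degree-one vertices) already yields the desired configuration by Lemma~\ref{lemma:sunflower}. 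Your proposal has no analogue of this overlap analysis, which is the crux of obtaining power saving: without it there is no way to rule out that all the glued pairs degenerate. Finally, the constant $+38$ in the paper does not arise as ``overhead'' to be controlled by sharp accounting; it comes from structural requirements on the seed $F_0=K_{16,16}^+$ (one needs $st\geq 8(s+t)$, two edge-disjoint spanning trees, etc.), a qualitatively different source of loss from what your plan anticipates.
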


We remark that the weaker bound $f(n,e+O(\log_2 e),e)=O(n^{2-\eps})$ was independently proved by different groups of authors. Indeed, a bound of this form was obtained by Conlon~\cite{Conprivate}, by Gishboliner, Levanzov and Shapira \cite{GLSprivate} and by Gao et. al. \cite{Gaoetal}, using different techniques from one another. The purpose of our paper is to show that we obtain power saving already near the S\'ark\"ozy-Selkow bound, at the cost of a small additive constant. Note that in order to obtain power saving in the S\'ark\"ozy-Selkow bound when $e = 3$, an additive constant is necessary because, as remarked earlier, there exist $n$-vertex $3$-uniform hypergraphs with $n^{2-o(1)}$ edges avoiding a $(6, 3)$-configuration.

The value of our small additive constant comes from the fact that while our methods allow us to construct large configurations whose number of edges is not much smaller than the number of vertices, they do not allow us to control the exact number of edges easily. Hence, we have to perform a final cleaning step in which we take edges out of our configuration until exactly $e$ edges remain, and this is where the constant $38$ comes from. 

\subsection{Proof overview and notation}

A natural way to think about the Brown-Erd\H{o}s-S\'os conjecture is in terms of the deficiency. Namely, for a 3-uniform hypergraph $F$, we define the \textit{deficiency} of $F$, denoted by $\Delta(F)$, as the difference between the number of vertices and edges of $F$, i.e. $\Delta(F)=v(F)-e(F)$. Throughout the paper, we will often consider deficiencies of subgraphs of $F$. So for a set of vertices $U\subset V(F)$, we define the deficiency of $U$ as the deficiency of the subgraph of $F$ induced by $U$ and write $\Delta(U)=\Delta(F[U])$. In this language, Conjecture~\ref{conj:BESConj} asks to show that in a 3-uniform hypergraph $\cH$ with $\Omega(n^2)$ edges one can always find a subhypergraph $F\subseteq \cH$ with $e(F)=e$ and $\Delta(F)=3$.

Before we explain how the proof of Theorem~\ref{thm:power saving} works, let us present a toy version of our proof which shows that any hypergraph $\cH$ with $n$ vertices and $\omega(n^{7/4})$ edges contains a $(13, 8)$-configuration. This proof would have two steps - the first would be to find many copies of a given $(8, 4)$-configuration and then to glue two of them together to obtain a $(13, 8)$-configuration. Using standard arguments, we may assume that $\cH$ is linear and tripartite with parts $X, Y,$ and $Z$, of roughly equal size.

We define a bipartite graph between $X$ and $Y$ where $(x, y)\in X\times Y$ is an edge precisely when there is a vertex $z\in Z$ such that $xyz\in E(\cH)$. This graph contains $\omega(n^{7/4})$ edges and thus it contains $\omega(n^3)$ four-cycles. Each of these four-cycles corresponds to a homomorphic copy of the hypergraph $F$ in $\cH$, where $F$ has $8$ vertices $x_1, x_2, y_1, y_2, z_1, z_2, z_3, z_4$ and $4$ edges $x_1y_1z_1, x_2y_1z_2, x_2y_2z_3, x_1y_2z_4$. In fact, it is not hard to see that most copies of $F$ are non-degenerate, thus showing that we have $\omega(n^3)$ actual copies of $F$ in $\cH$. 

We may further randomly partition $X=X_1\cup X_2$, $Y=Y_1\cup Y_2$ and $Z=Z_1\cup Z_2\cup Z_3\cup Z_4$ such that the number of copies of $F$ with $x_i\in X_i,$ $y_i \in Y_i$ for each $1 \le i \le 2$ and $z_i\in Z_i$ for each $1 \le i \le 4$ is still $\omega(n^3)$. Thus, we must have two different copies of $F$ with vertex-sets $\{x_1, \dots, z_4\}$ and $\{x_1', \dots, z_4'\}$, which satisfy $z_1=z_1'$, $z_2=z_2'$ and $z_3=z_3'$ (see Figure~\ref{fig1}). Note that the sets $\{x_1, x_2, y_1, y_2\}$ and $\{x_1', x_2', y_1', y_2'\}$ are disjoint since $\cH$ is linear. Indeed, suppose $x_1=x_1'$ (note that $x_1$ may coincide only with $x_1'$ because of our partition into sets $X_1, \dots, Z_4$).  Then, since $z_1=z_1'$, the linearity of $\cH$ implies $y_1=y_1'$. Similarly, since $z_2=z_2'$, we must also have $x_2=x_2'$. Further, we also get $y_2=y_2'$ and so $z_4=z_4'$, demonstrating that the two copies of $F$ we started with were actually the same copy, a contradiction. An almost identical argument leads to a contradiction if $x_2=x_2'$, $y_1=y_1'$ or $y_2=y_2'$ instead. 

This argument leaves us with only two cases - either there are many copies of $F$ which overlap on the four vertices $z_1, z_2, z_3, z_4$ or there are many pairs of copies of $F$ which overlap only on $z_1, z_2, z_3$, in which case we obtain many non-degenerate copies of our $(13, 8)$-configuration. We will see this dichotomy in the main proof as well, where starting with many copies of the small hypergraph $F$ in $\cH$, we will either be able to find many copies of $F$ glued along a set of deficiency at least $\Delta(F)$ (in which case, by taking the union of many such copies we obtain a hypergraph with many edges but with deficiency still at most $\Delta(F)$) or we will be able to find many pairs of copies of $F$ which are glued in a non-degenerate way along an independent set of size $\Delta(F)-1$. This produces many copies of a new hypergraph $F'$, which allows us to try to repeat the above argument with $F'$ replaced by $F$. The main difficulty lies in controlling the overlaps which may arise, so that we can continue the iteration and obtain the required bound on the deficiency. 

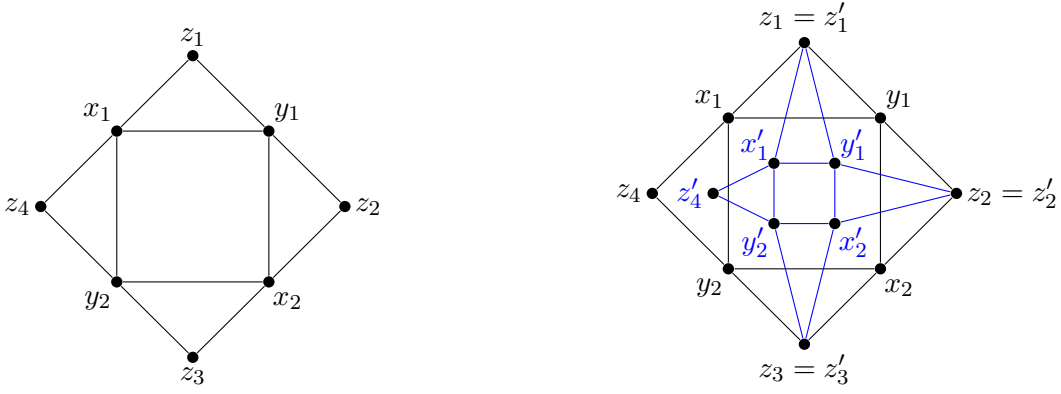
\begin{figure}[t!]
\centering
\begin{subfigure}
  \centering
      \begin{tikzpicture}
        \coordinate[vtx] (x1) at (-1, 1);
        \coordinate[vtx] (y1) at (1, 1);
        \coordinate[vtx] (x2) at (1, -1);
        \coordinate[vtx] (y2) at (-1, -1);

        \coordinate[vtx] (z1) at (0, 2);
        \coordinate[vtx] (z2) at (2, 0);
        \coordinate[vtx] (z3) at (0, -2);
        \coordinate[vtx] (z4) at (-2, 0);
        
        \node at ([shift={(135:0.35)}]x1) {$x_1$}; 
        \node at ([shift={(45:0.35)}]y1) {$y_1$}; 
        \node at ([shift={(-45:0.35)}]x2) {$x_2$}; 
        \node at ([shift={(-135:0.35)}]y2) {$y_2$}; 

        \draw (z1) node[above] {$z_1$};
        \draw (z2) node[right] {$z_2$};
        \draw (z3) node[below] {$z_3$};
        \draw (z4) node[left] {$z_4$};
        
        \draw (x1) -- (y1) -- (x2) -- (y2) -- (x1);
        \draw (x1) -- (z1) -- (y1) -- (z2) --(x2) -- (z3) --(y2) -- (z4) -- (x1);
    \end{tikzpicture}
\end{subfigure}%
~
\begin{subfigure}
  \centering
      \begin{tikzpicture}
        \coordinate[vtx] (x1) at (-1, 1);
        \coordinate[vtx] (y1) at (1, 1);
        \coordinate[vtx] (x2) at (1, -1);
        \coordinate[vtx] (y2) at (-1, -1);

        \coordinate[vtx] (x1') at (-0.4, 0.4);
        \coordinate[vtx] (y1') at (0.4, 0.4);
        \coordinate[vtx] (x2') at (0.4, -0.4);
        \coordinate[vtx] (y2') at (-0.4, -0.4);

        \coordinate[vtx] (z1) at (0, 2);
        \coordinate[vtx] (z2) at (2, 0);
        \coordinate[vtx] (z3) at (0, -2);
        \coordinate[vtx] (z4) at (-2, 0);
        \coordinate[vtx] (z4') at (-1.2, 0);
        
        \node at ([shift={(135:0.35)}]x1) {$x_1$}; 
        \node at ([shift={(45:0.35)}]y1) {$y_1$}; 
        \node at ([shift={(-45:0.35)}]x2) {$x_2$}; 
        \node at ([shift={(-135:0.35)}]y2) {$y_2$}; 
        
        \node[blue] at ([shift={(135:0.35)}]x1') {$x_1'$}; 
        \node[blue] at ([shift={(45:0.35)}]y1') {$y_1'$}; 
        \node[blue] at ([shift={(-45:0.35)}]x2') {$x_2'$}; 
        \node[blue] at ([shift={(-135:0.35)}]y2') {$y_2'$}; 

        \draw (z1) node[above] {$z_1=z_1'$};
        \draw (z2) node[right] {$z_2=z_2'$};
        \draw (z3) node[below] {$z_3=z_3'$};
        \draw (z4) node[left] {$z_4$};
        \draw[blue] (z4') node[left] {$z_4'$};
        
        \draw (x1) -- (y1) -- (x2) -- (y2) -- (x1);
        \draw (x1) -- (z1) -- (y1) -- (z2) --(x2) -- (z3) --(y2) -- (z4) -- (x1);

        \draw[blue] (x1') -- (y1') -- (x2') -- (y2') -- (x1');
        \draw[blue] (x1') -- (z1) -- (y1') -- (z2) --(x2') -- (z3) --(y2') -- (z4') -- (x1');
        
    \end{tikzpicture}
\end{subfigure}
\caption{$(8, 4)$- and $(13, 8)$-configurations}
\label{fig1}
\end{figure}

Let us now briefly sketch how the ideas from this toy proof can be extended to show Theorem~\ref{thm:power saving}. On a very high level, the idea is to glue together smaller hypergraphs without increasing their deficiency too much in order to construct large structures with small deficiency. The first step of our proof will be to construct a sequence of hypergraphs $F_0, F_1, \dots, F_\ell$ such that $e(F_j)= 2^je(F_0)$ and $\Delta(F_j)=\Delta(F_0)+j$, where for every $0 \le j \le \ell-1$, $F_{j+1}$ is obtained by gluing two copies of $F_j$ over a set of deficiency $\Delta(F_j)-1$. More precisely, the hypergraph $F_j$ will have a designated independent set $A_j$ of size $\Delta(F_j)-1$ and the hypergraph $F_{j+1}$ will consist of two copies of $F_j$ which overlap precisely on this independent set $A_j$. Thus, we have $e(F_{j+1})=2e(F_j)$ and $v(F_{j+1})=2v(F_j)-\Delta(F_j)+1$, giving $\Delta(F_{j+1})=\Delta(F_j)+1$.

Then, the hope would be to show by induction that there are at least $n^{\Delta(F_j)-O(\eps)}$ copies of $F_j$ in $\cH$ for every $0 \le j \le \ell$. If this statement holds for the hypergraph $F_j$, then for each of its copies in $\cH$ we mark the set of $\Delta(F_j)-1$ vertices of $\cH$ corresponding to the independent set $A_j\subset V(F_j)$. A typical set of $\Delta(F_j)-1$ vertices of $\cH$ is then marked at least $n^{1-O(\eps)}$ times, so there are at least $n^{\Delta(F_j)-1}(n^{1-O(\eps)})^2$ pairs of copies of $F_j$ overlapping on the set of vertices corresponding to $A_j$. If most of these pairs produce actual copies of $F_{j+1}$, i.e. if they do not have additional overlaps outside the set $A_j$, then $\cH$ contains $\Omega(n^{\Delta(F_j)+1-O(\eps)}) = \Omega(n^{\Delta(F_{j+1})-O(\eps)})$ copies of $F_{j+1}$, which is sufficient to prove our induction step and continue our iteration. However, it is possible that most of these pairs will not give actual copies of $F_{j+1}$ and in this case we will show that there exist many copies of $F_j$ overlapping over the exact same set of vertices $U\subset V(F_j)$ with the property that $A_j\subsetneq U$, thus forming a sunflower-like structure. If we define the independent set $A_j$ carefully, we will be able to show that for any $U\subset V(F_j)$ with $A_j\subset U$ and $A_j \neq U$, we have $\Delta(U)\geq \Delta(F_j)$. In particular, this means that if $A_j\subset U$ and $A_j \neq U$, then every new copy of $F_j$ glued over the set $U$ contributes at least as many edges as it contributes vertices. This means that we can grow our structure arbitrarily without increasing its deficiency, which is precisely what we want.

One feature of our proof is that we are not able to control the configuration that we get exactly. This is because we can only show that $\cH$ either contains hypergraphs $F_j$ (up to a certain value of $j$) or a sunflower-like structure whose petals do not add any deficiency. Since our argument will not necessarily produce a $(v, e)$-configuration for some pre-specified $e \geq 3$, a final stage of cleaning is needed in order to turn the configuration we obtain into a configuration with exactly $e$ edges. In order to do this, we inductively maintain the property that most of the vertices in $F_j$ have degree $1$ (for each $j$). Thus, if we find a configuration with many more edges and vertices than we need, we can remove several vertices of degree $1$ from the configuration without changing its deficiency until the number of edges is exactly $e$.

\textbf{Organisation of the paper.} In Section~\ref{subsec:construction of eligible hypergraphs} we formally define the conditions we require the hypergraphs $F_0, \dots, F_\ell$ to satisfy and show how to construct $F_{j+1}$ from $F_j$ so that all of these properties are maintained. We will also define $F_0$ and show how to find many copies of $F_0$ in any linear hypergraph with $\Omega(n^{2-\eps})$ edges. Then, in Section~\ref{subsec:iteration} we show how to use the fact that $\cH$ contains many copies of $F_j$ to either find many copies of $F_{j+1}$ in $\cH$ or to find a sunflower-like structure. We then use this, along with a procedure which shows how to remove the degree $1$ vertices from a sunflower-like structure to obtain a $(v, e)$-configuration for suitable values of $v$ and $e$, to complete the proof of Theorem~\ref{thm:power saving}.

\section{Proof of Theorem~\ref{thm:power saving}}

\subsection{Construction of eligible hypergraphs}\label{subsec:construction of eligible hypergraphs}

As discussed in the proof overview, our aim is to construct a sequence of hypergraphs $F_0, \dots, F_\ell$ satisfying certain properties. In this section we will state these properties precisely. In particular, we will show how to construct $F_{j+1}$ from $F_j$ while making sure that all the necessary properties are maintained. To be able to state these properties, we need to introduce a special kind of independent set in a hypergraph, which is particularly suited for gluing along it.

\begin{definition}\label{defn:good}
Let $F$ be a 3-uniform hypergraph. We say that an independent set $A\subseteq V(F)$ is \textit{good} if for any set $U\supsetneq A$, we have $\Delta(U)\geq |A|+1$.
\end{definition}

\noindent
Note that the collection of good sets is closed under taking subsets. To see this, suppose that we have a good set $A\subset V(F)$ and $A'\subset A$. To show that $A'$ is good, we need to verify that for any $U\supsetneq A'$ one has $\Delta(U)\geq |A'|+1$. If $U\subset A$, this is clear since $U$ must be an independent set and hence $\Delta(U)=|U|\geq |A'|+1$. Otherwise, by using that $A$ is good and that $A\subsetneq U\cup A$ we obtain $\Delta(U\cup A)\geq |A|+1$. This allows us to show the desired bound on $\Delta(U)$ as follows.
\begin{align*}
    \Delta(U)=v(F[U])-e(F[U])&\geq v(F[U\cup A])-|A\backslash U|-e(F[U\cup A])\\
    &\geq \Delta(U\cup A)-|A\backslash A'|\geq |A|-|A\backslash A'|+1=|A'|+1.
\end{align*}

This discussion allows us to conclude that for any good set $A$ and any set $U\subseteq V(F)$ not fully contained in $A$, the set $A\cap U$ is also a good set and therefore $\Delta(U)\geq |A\cap U|+1$. 

Our definition of good sets vaguely resembles Definition 2.3 in the paper of Conlon, Gishboliner, Levanzov and Shapira \cite{CGLS23}. Namely, they consider independent sets $A$ of size $\Delta(F)+1$ with the property that the deficiency of any set $U\subset V(F)$ is roughly controlled by $|A\cap U|$. However, their definition is more complex than ours, because their gluing procedure (relying on the hypergraph regularity lemma) glues several copies of the hypergraph at a time instead of $2$ as in our paper.

\begin{definition}\label{defn:eligible}
A 3-uniform hypergraph $F$ with $\Delta(F)=k\geq 1$ is \textit{eligible} if it satisfies the following properties:
\begin{itemize}
    \item[(i)] there are two disjoint good sets $A, B\subset V(F)$ of size $|A|=|B|=k-1$,
    \item[(ii)] there exist distinct vertices $u, v \in V(F) \setminus (A \cup B)$,
    \item[(iii)] there are at most $e(F)/4-k$ vertices of $F$ with degree more than 1. Moreover, every edge of $F$ contains at most one vertex of degree $1$, $F$ has no isolated vertices, and
    \item[(iv)] every $U\subseteq V(F)$ of size $|U|\geq 2$ satisfies $\Delta(U)\geq 2$.
\end{itemize}
\end{definition}

\noindent
Note that all eligible hypergraphs have $e(F)\geq 4k$, by (iii). The following lemma shows how to construct the hypergraph $F_{j+1}$ based on $F_j$ and shows that the eligibility is maintained under this construction. The hypergraphs $F_j$ constructed by repeated applications of this lemma will have deficiency $\Delta(F_j)=j$ and $e(F_j)\approx 2^j$ edges.

\begin{lemma}\label{lemma:maintaining eligibility}
Let $F$ be an eligible hypergraph with $\Delta(F)=k$. Suppose $F'$ is the hypergraph obtained by taking two vertex-disjoint copies of $F$ and only identifying the vertices of the sets $A$ in both copies. Then $F'$ is an eligible hypergraph with $\Delta(F')=k+1$ and $e(F')=2e(F)$.
\end{lemma}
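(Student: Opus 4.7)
The plan is to label the two copies of $F$ as $F_1, F_2$, denoting the shared independent set by $A$, the two copies of $B$ by $B_1, B_2$, and the two copies of the distinguished vertices by $u_i, v_i \in V(F_i)$. Since $A$ is independent, no edges are identified, so $e(F') = 2e(F)$; meanwhile $v(F') = 2v(F) - (k-1)$, and therefore $\Delta(F') = 2k - (k-1) = k+1$ immediately.

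For condition (i) of eligibility, I would take the two good sets of $F'$ to be $A' := B_1 \cup \{u_2\}$ and $B' := B_2 \cup \{u_1\}$, each of size $k = \Delta(F') - 1$; these are disjoint because $A \cap B = \emptyset$ in $F$ and $u \notin A \cup B$. To verify goodness of $A'$, I would take an arbitrary $U \supsetneq A'$, write $U_i = U \cap V(F_i)$, and note that $U_1 \cap U_2 \subseteq A$ together with the independence of $A$ gives $\Delta_{F'}(U) = \Delta_F(U_1) + \Delta_F(U_2) - |U_1 \cap U_2|$. The case analysis splits on whether $U_1 = B_1$ or $U_1 \supsetneq B_1$, and whether $U_2 = \{u_2\}$ or $U_2 \supsetneq \{u_2\}$. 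Whenever $U_1 = B_1$ or $U_2 = \{u_2\}$ the intersection $U_1 \cap U_2$ is empty (since $B_1$ and $\{u_2\}$ are both disjoint from $A$), and the bound $\Delta_{F'}(U) \geq k + 1$ follows from property (iv) of $F$ (which gives $\Delta_F(U_2) \geq 2$ when $|U_2| \geq 2$) combined with goodness of $B$ (which gives $\Delta_F(U_1) \geq k$ when $U_1 \supsetneq B_1$). The delicate case is $U_1 \supsetneq B_1$, $U_2 \supsetneq \{u_2\}$, and $c := |U_1 \cap U_2| \geq 1$; here I would invoke the observation from the preamble that $U_2 \not\subseteq A$ (because $u_2 \notin A$) forces $\Delta_F(U_2) \geq |A \cap U_2| + 1 \geq c + 1$, while goodness of $B$ still yields $\Delta_F(U_1) \geq k$, giving $\Delta_{F'}(U) \geq k + (c+1) - c = k + 1$. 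Goodness of $B'$ is completely symmetric.

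The remaining conditions require only routine bookkeeping. Condition (ii) is witnessed by the distinct vertices $v_1, v_2$, which avoid $A' \cup B' = B_1 \cup B_2 \cup \{u_1, u_2\}$. For (iii), every vertex of $A$ is non-isolated in each copy of $F$, so has degree at least $2$ in $F'$, while every vertex outside $A$ inherits its degree from exactly one copy; writing $S_i$ for the set of vertices of $F_i$ with degree at least $2$, the count of such vertices in $F'$ is at most $|S_1| + |S_2| + |A| \leq 2(e(F)/4 - k) + (k-1) = e(F)/2 - k - 1 = e(F')/4 - (k+1)$, matching the required bound. The vertices of $F'$ of degree $1$ all lie outside $A$ and have degree $1$ in their copy of $F$, so the per-edge bound transfers from $F$, and no isolated vertices arise. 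For (iv), if $U \subseteq V(F_i)$ the claim reduces to (iv) for $F$; otherwise $U_1, U_2$ are both not contained in $A$, so applying $\Delta_F(U_i) \geq |A \cap U_i| + 1$ to both pieces yields $\Delta_{F'}(U) \geq |A \cap (U_1 \cup U_2)| + 2 \geq 2$. The main obstacle is the delicate subcase in verifying goodness of $A'$, where the naive lower bound is only $k + 2 - (k-1) = 3$; the saving comes from the fact that extending $B_1$ by a vertex $u_2 \notin A$ lets the goodness of $A$ in $F$ act nontrivially on $U_2$ and cancel the intersection overcount.
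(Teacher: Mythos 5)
Your proposal is correct and follows essentially the same route as the paper: the same identity $\Delta_{F'}(U) = \Delta_F(U_1) + \Delta_F(U_2) - |U_1\cap U_2|$, the same construction of $A',B'$ as a copy of $B$ augmented by one "extra" vertex from the other side, and the same use of goodness of $B$, goodness of $A$, condition (iv), and the degree count for (iii). The only cosmetic differences are that the paper uses $v_i$ rather than $u_i$ as the augmenting vertices and collapses your subcases for $U_2$ into one: when $U_1\supsetneq B_1$ the paper directly combines $\Delta(U_1)\geq k$ with $\Delta(U_2)\geq |A\cap U_2|+1$ (valid since $v_2\in U_2\setminus A$) to cancel the intersection term, which covers $U_2=\{u_2\}$ and $c=0$ without separate treatment.
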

\begin{proof}
Let us denote the two copies of $F$ by $F_1$ and $F_2$. Since $A$ is an independent set, no edges of $F_1$ and $F_2$ are identified and hence we have $e(F')=2e(F)$. Moreover, since exactly $k-1$ vertices are identified, we have $v(F')=2v(F)-(k-1)$ and therefore \[\Delta(F')=v(F')-e(F')=2v(F)-(k-1)-2e(F)=2k-(k-1)=k+1.\]

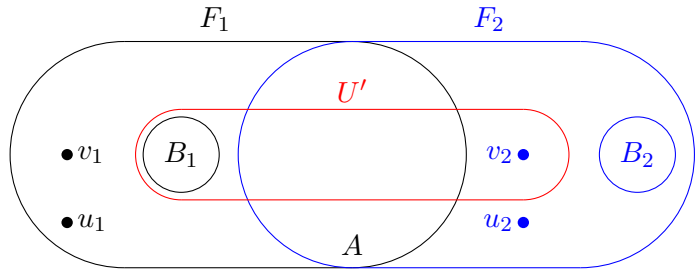
\begin{figure}[h!]
    \centering
    \begin{tikzpicture}
  \def\R{1.5}

  \draw (-2*\R, \R) arc(90:270:\R);
  \draw (-2*\R, \R) -- (0, \R);
  \draw (-2*\R, -\R) -- (0, -\R);
  \draw (0, \R) arc(90:-90:\R);
  \draw[blue] (2*\R, \R) arc(90:-90:\R);
  \draw[blue] (0, \R) arc(90:270:\R);
  \draw[blue] (2*\R, \R) -- (0, \R);
  \draw[blue] (2*\R, -\R) -- (0, -\R);
  
  \draw (-1.5*\R, 0) circle(0.5);
  \coordinate[vtx] (v1) at (-2.5*\R, 0);
  \coordinate[vtx] (u1) at (-2.5*\R, -0.9);

  \draw[blue] (2.5*\R, 0) circle(0.5);
  \coordinate[vtx, blue] (v2) at (1.5*\R, 0);
  \coordinate[vtx, blue] (u2) at (1.5*\R, -0.9);

  \draw[red] (-1.5*\R, 0.6) arc(90:270:0.6);
  \draw[red] (-1.5*\R, 0.6) -- (1.5*\R, 0.6);
  \draw[red] (-1.5*\R, -0.6) -- (1.5*\R, -0.6);
  \draw[red] (1.5*\R, 0.6) arc(90:-90:0.6);
  \node at (-1.2*\R, 1.2*\R) {$F_1$}; 
  \node[blue] at (1.2*\R, 1.2*\R) {$F_2$}; 
  \node[right] at (v1) {$v_1$};
  \node[right] at (u1) {$u_1$};
  \node at (-1.5*\R, 0) {$B_1$};
  \node[left, blue] at (v2) {$v_2$}; 
  \node[left, blue] at (u2) {$u_2$}; 
  \node[blue] at (2.5*\R, 0) {$B_2$};
  \node[red] at (0, 0.85) {$U'$}; 
  \node at (0, -0.8*\R) {$A$}; 
    \end{tikzpicture}
    \caption{Illustration of the proof of Lemma~\ref{lemma:maintaining eligibility}.}
\end{figure}

It remains to show that $F'$ is an eligible hypergraph. We begin by introducing some notation and defining the sets $A', B'$ and vertices $u', v'$ satisfying the conditions $\mathrm{(i)}$ and $\mathrm{(ii)}$ of Definition~\ref{defn:eligible}.

For $i\in \{1,2\}$, since $F_i$ is eligible, it contains a good set $B_i$ disjoint from $A$, and additional vertices $u_i, v_i \in V(F_i) \setminus (A\cup B_i)$. To show that $F'$ is eligible we define two sets $A' \coloneqq B_1\cup \{v_2\}$, $B'\coloneqq B_2\cup \{v_1\}$, along with additional vertices $u' \coloneqq u_1$ and $v' \coloneqq u_2$. Note that $u'$ and $v'$ are distinct and $u', v' \in V(F') \setminus (A' \cup B')$, verifying the second condition in Definition \ref{defn:eligible}. Finally, for every set $U'\subset V(F')$, we define the sets $U^{(1)}=U'\cap V(F_1)$ and $U^{(2)}=U'\cap V(F_2)$.

Let us now show that $A', B'\subset V(F')$ are good sets with respect to $F'$. Since the definitions of $A'$ and $B'$ are completely symmetric, it suffices to verify the conditions for only one of these sets, say $A'$. Note that $|A'|=|B_1|+1=k=\Delta(F')-1$, which shows that $A'$ has the required size. Further, $A'$ is an independent set since there are no edges between $F_1\backslash A$ and $F_2\backslash A$, and so, in particular, there are no edges between $B_1$ and $v_2$. To show that $A'$ is a good set we need to show that for any set $U'$ with $A'\subsetneq U'\subseteq V(F')$ one has $\Delta(U')\geq |A'|+1=k+1$. We will establish this inequality by using the properties of good sets $B_1\subseteq V(F_1)$ and $A\subseteq V(F_1)$. We have two cases: either $U^{(1)}\supsetneq B_1$ or $U^{(1)}=B_1$ (since $A'=B_1\cup \{v_2\}\subset U'$, we must have $B_1\subseteq U^{(1)}$). 

In the case that $U^{(1)}\supsetneq B_1$, we use the assumption that $B_1$ is a good set in $F_1$ to conclude $\Delta(U^{(1)})\geq |B_1|+1$. Furthermore, we know that $A\cap U^{(2)}$ is a good set in $F_2$ and $A\cap U^{(2)}\subsetneq U^{(2)}$ since $v_2\in U^{(2)}$. Thus, we have $\Delta(U^{(2)})\geq |A\cap U^{(2)}|+1 = |A\cap U'|+1$. These two bounds on $\Delta(U^{(1)})$ and $\Delta(U^{(2)})$ are sufficient to bound the deficiency of $U'$ in $F'$ as follows.
\begin{align*}
    \Delta(U')&=v(F'[U'])-e(F'[U']) \\ &=v(F_1[U^{(1)}])+v(F_2[U^{(2)}])-|U^{(1)}\cap U^{(2)}|-e(F_1[U^{(1)}])-e(F_2[U^{(2)}])\\
    &=\Delta(U^{(1)})+\Delta(U^{(2)})-|A\cap U'|\geq|B_1|+1+|A\cap U'|+1-|A\cap U'|=|B_1|+2=k+1.
\end{align*}

In the case that $U^{(1)}=B_1$, there are no vertices in $U^{(1)}\cap U^{(2)}=U^{(1)}\cap A$ and there are at least two vertices in $U^{(2)}$ since $B_1\cup \{v_2\}\subsetneq U'$. Hence, $\Delta(U^{(1)})=|B_1|$ since $B_1$ is an independent set, and $\Delta(U^{(2)})\geq 2$ since $F_2$ satisfies condition $\mathrm{(iv)}$ of Definition~\ref{defn:eligible}. Hence,
\begin{align*} \Delta(U') &=v(F_1[U^{(1)}])+v(F_2[U^{(2)}])-e(F_1[U^{(1)}])-e(F_2[U^{(2)}])\\
&=\Delta(U^{(1)})+\Delta(U^{(2)})\geq |B_1|+2=k+1.
\end{align*}

This completes the proof that $A'$ is a good set, thus verifying the first condition (of Definition~\ref{defn:eligible}) for $F'$ to be eligible. Further, to show that $F'$ satisfies the third condition, note that any vertex of $F'$ having degree more than 1 is either in $A$ or it already had degree more than 1 in $F_1$ or $F_2$. Hence, there are at most $2(e(F)/4-k)+k-1=e(F')/4-k-1$ vertices of degree more than 1 in $F'$. Moreover, it is clear that every edge of $F'$ has at most one vertex of degree $1$, since any vertex with degree $1$ in $F'$ must also have degree $1$ in $F_1$ or $F_2$. It is also clear that $F'$ has no isolated vertex, since $F_1$ and $F_2$ do not have an isolated vertex.

Finally, to show that $F'$ satisfies the fourth condition, we now verify that every set $U'\subseteq V(F')$ of size $|U'|\geq 2$ satisfies $\Delta(U')\geq 2$. Suppose for a contradiction that there is a set $U'\subseteq V(F')$ of size $|U'|\geq 2$ such that $\Delta(U')\leq 1$. Then, the assumption that $F_1$ and $F_2$ are eligible implies that we cannot have $U^{(1)}=U'$ or $U^{(2)}=U'$. This means that both $U^{(1)}$ and $U^{(2)}$ contain vertices outside $A$. Since $A\cap U'$ is a good set for both $F_1$ and $F_2$, we know that $\Delta(U^{(1)})\geq |A\cap U'|+1$ and $\Delta(U^{(2)})\geq |A\cap U'|+1$. Thus, 
\begin{align*}
    \Delta(U')&=v(F_1[U^{(1)}])+v(F_2[U^{(2)}])-|U^{(1)}\cap U^{(2)}|-e(F_1[U^{(1)}])-e(F_2[U^{(2)}])\\
    &=\Delta(U^{(1)})+\Delta(U^{(2)})-|A\cap U'|\geq |A\cap U'|+2\geq 2,
\end{align*}
completing the proof of the lemma.
\end{proof}

\noindent
To start the iteration, we need to show how to find eligible graphs. The following lemma shows how to construct an eligible hypergraph $F_0$ and find many copies of it in $\cH$.

\begin{lemma}\label{lemma:start}
Let $1\leq s\leq t$ be positive integers. Let $K_{s, t}^+$ be the hypergraph obtained by adding a single vertex $v_e$ to every edge $e\in E(K_{s, t})$ of the complete bipartite graph $K_{s, t}$. For any $0<\eps<1/s^3t$ and sufficiently large $n$, every linear hypergraph $\cH$ with $n$ vertices and $\Omega(n^{2-\eps})$ edges contains $\Omega(n^{s+t-s^3t\eps})$ copies of $K_{s, t}^+$. Moreover, if $8(s+t)\leq st$, the hypergraph $K_{s, t}^+$ is eligible and satisfies $\Delta(K_{s, t}^+)=s+t$ and $e(K_{s, t}^+)=st$.
\end{lemma}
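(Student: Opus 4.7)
The proof naturally splits into two parts: counting copies of $K_{s,t}^+$ in $\cH$, and verifying that $K_{s,t}^+$ is eligible.

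For the counting, I would pass to the shadow graph $G$ on $V(\cH)$, defined by $\{x,y\}\in E(G)$ iff there is some $z$ with $\{x,y,z\}\in E(\cH)$; by linearity such a $z$ is unique, and each hyperedge of $\cH$ contributes three distinct shadow edges, so $|E(G)|=3|E(\cH)|=\Omega(n^{2-\eps})$. A standard Sidorenko / K\H{o}v\'ari--S\'os--Tur\'an convexity count then produces $\Omega(n^{s+t-st\eps})$ labelled homomorphic copies of $K_{s,t}$ in $G$, of which only $O(n^{s+t-1})$ are non-injective. Each injective $K_{s,t}$ lifts to a homomorphic $K_{s,t}^+$ in $\cH$ by setting the leaf $v_{ij}$ to be the unique witness of $\{x_i,y_j\}$. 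The lift can be improper in two ways: (a) some $v_{ij}$ lies in $V(K_{s,t})$; (b) $v_{ij}=v_{i'j'}$ for some $(i,j)\neq (i',j')$. Case (a) is bounded crudely by $O(|E(\cH)|\cdot n^{s+t-3})=O(n^{s+t-1-\eps})$. For (b) the crucial input is linearity: the link of any vertex $z\in V(\cH)$ (the set of pairs $\{x,y\}$ with $\{x,y,z\}\in E(\cH)$) is a matching, since two hyperedges sharing $z$ cannot share another vertex. Thus the degree of $z$ in $\cH$ is at most $n/2$, and summing $\deg(z)^2$ gives $O(n\cdot |E(\cH)|)=O(n^{3-\eps})$; moreover, two hyperedges with a common witness $z$ must have their remaining pairs vertex-disjoint in the $K_{s,t}$. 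Extending such a disjoint pair to a full $K_{s,t}$ costs $O(n^{s+t-4})$ ways, giving $O(n^{s+t-1-\eps})$ type-(b) degeneracies in total. Both bounds are dominated by $\Omega(n^{s+t-st\eps})\geq\Omega(n^{s+t-s^3t\eps})$ when $\eps<1/(s^3t)$, proving the counting part.

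For eligibility, write the vertex set of $K_{s,t}^+$ as $\{u_i\}_{i\in[s]}\cup\{w_j\}_{j\in[t]}$ (the \emph{cores}, of degrees $t$ and $s$ respectively) together with $\{v_{ij}\}_{(i,j)\in[s]\times[t]}$ (the \emph{leaves}, of degree $1$). Then $e(K_{s,t}^+)=st$ and $\Delta(K_{s,t}^+)=s+t$ by inspection. The main tool is the identity
\[\Delta(U)\;=\;|I|+|J|+\bigl|P\setminus(I\times J)\bigr|\]
for any $U\subseteq V(K_{s,t}^+)$, where $I=\{i:u_i\in U\}$, $J=\{j:w_j\in U\}$, and $P=\{(i,j):v_{ij}\in U\}$. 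Property (iv) follows by a case split on $|I|+|J|$; property (iii) follows since only the $s+t$ cores have degree exceeding $1$, and the inequality $s+t\leq st/4-(s+t)$ is exactly the hypothesis $8(s+t)\leq st$. For (i) and (ii), the identity reveals that a set $A=\{v_{ij}:(i,j)\in S\}$ consisting of $s+t-1$ leaves is good iff $|S\cap(I\times J)|\leq |I|+|J|-1$ for every non-empty $I\cup J$, equivalently iff $S$ is acyclic as a bipartite graph on $[s]\cup[t]$, which with $|S|=s+t-1$ means $S$ is a spanning tree of $K_{s,t}$. Since $st\geq 8(s+t)$, the Nash--Williams theorem yields two edge-disjoint spanning trees of $K_{s,t}$, producing the required disjoint good sets $A,B$; two further vertices $u,v\in V(K_{s,t}^+)\setminus(A\cup B)$ exist as $st-2(s+t-1)\geq 2$.

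The hardest step is the type-(b) degeneracy bound: it is the only place where the linearity hypothesis is genuinely essential, and it rests entirely on the matching structure of the links in a linear hypergraph. The eligibility verification is essentially bookkeeping, save for the pleasant observation that "good" sets of leaves correspond precisely to spanning trees of the underlying $K_{s,t}$.
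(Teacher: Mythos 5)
Your proof is correct, and for the counting part it takes a genuinely different route from the paper. The paper passes to a tripartite subhypergraph with parts $X,Y,Z$, builds a bipartite auxiliary graph on $X\cup Y$ with a proper edge-colouring by $Z$, and counts \emph{rainbow} copies of $K_{s,t}$ via a lemma of Keevash, Mubayi, Sudakov and Verstra\"ete (finding a rainbow $K_{s,t}$ inside each properly coloured $K_{s,s^2t}$); the tripartite-plus-rainbow structure makes all degeneracies disappear automatically, at the cost of the extra factor of $s^2$ in the exponent, which is where the $s^3t\eps$ comes from. You instead work with the full shadow graph, get $\Omega(n^{s+t-st\eps})$ homomorphic $K_{s,t}$'s directly from the K\H{o}v\'ari--S\'os--Tur\'an/Sidorenko supersaturation (a \emph{better} exponent than the lemma requires), and then explicitly discard the degenerate lifts: non-injective bases, witnesses landing in the base, and coinciding witnesses. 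The last of these is where you genuinely invoke linearity via the matching-link bound $\deg(z)\leq n/2$. All three degeneracy counts are $O(n^{s+t-1})$, which is $o(n^{s+t-st\eps})$ because $st\eps<1$. One minor slip: you write the type-(a) and type-(b) bounds as $O(n^{s+t-1-\eps})$, implicitly treating $|E(\cH)|$ as $\Theta(n^{2-\eps})$; since a linear $\cH$ may have as many as $\Theta(n^2)$ edges the honest bound is $O(n^{s+t-1})$, but this does not affect the conclusion. For the eligibility half you follow essentially the same route as the paper (good sets of leaves $\leftrightarrow$ spanning trees of $K_{s,t}$, two edge-disjoint spanning trees when $st\geq 8(s+t)$), but your identity $\Delta(U)=|I|+|J|+|P\setminus(I\times J)|$ packages the verification of (iv) and the characterisation of good leaf-sets more cleanly than the paper's ad hoc case analysis.
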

\begin{proof}
To show that every linear hypergraph $\cH$ with $\Omega(n^{2-\eps})$ edges contains many copies of $K_{s, t}^+$, we take a tripartite subgraph $\cH'\subseteq \cH$ containing $\Omega(e(\cH))$ edges and whose parts all have size at least $n/4$ (which can be done by randomly partitioning the vertex set). Let us denote the three parts by $X, Y, Z$, and consider an auxiliary colored graph $G$ on the vertex set $X\cup Y$. Vertices $x\in X, y\in Y$ are adjacent in $G$ precisely when there exists a vertex $z\in Z$ such that $xyz$ is an edge of $\cH$. In this case, the edge $xy$ receives color $z$. Since $\cH$ is linear, each edge of $G$ is assigned a unique color. Furthermore, the coloring is proper for the same reason. Finally, $G$ contains $\Omega(n^{2-\eps})$ edges and $\Theta(n)$ vertices.

The reason behind defining the graph $G$ is the simple observation that every rainbow copy of $K_{s, t}$ in $G$ corresponds to a copy of $K_{s, t}^+$ in $\cH$. Hence, it suffices to show that there are $\Omega(n^{s+t-s^3t\eps})$ rainbow copies of $K_{s, t}$ in $G$. To do this, we use a simple lemma from the paper of Keevash, Mubayi, Sudakov and Verstra\"ete \cite{KMSV07} which states that in any properly colored copy of $K_{s, t'}$, for $t'>(s(s-1)+1)(t-1)$ one can find a rainbow copy of $K_{s, t}$. Let us fix $t'=s^2t$ and use the supersaturation of $K_{s, t'}$ in $G$ to obtain at least $\Omega(n^{s+t'-st'\eps})$ copies of $K_{s, t'}$ in $G$ (see e.g., the K\H{o}v\'{a}ri-S\'{o}s-Tur\'{a}n theorem~\cite{kHovari1954problem}). Then, using Lemma 2.3 from \cite{KMSV07}, in each of these copies one can find a rainbow copy of $K_{s, t}$. Since each copy of $K_{s, t}$ is contained in at most $n^{t'-t}$ copies of $K_{s, t'}$, we conclude that we have at least $\Omega(\frac{n^{s+t'-st'\eps}}{n^{t'-t}})=\Omega(n^{s+t-st'\eps})=\Omega(n^{s+t-s^3t\eps})$ rainbow copies of $K_{s, t}$, as desired.

The number of edges of $K_{s, t}^+$ is equal to the number of edges of $K_{s, t}$ and so $e(K_{s, t}^+)=st$. Further, since we add a new vertex to $K_{s, t}$ for every edge, the total number of vertices of $K_{s, t}^+$ is $v(K_{s, t}^+)=s+t+st$ and so $\Delta(K_{s, t}^+)=s+t$.

Finally, if $8(s+t)\leq st$, then we will show that $K_{s, t}^+$ is eligible. Note that if $T$ is a spanning tree of $K_{s, t}$, then the set $A_T=\{v_e | e\in T\}$ is a good set of size $s+t-1=\Delta(K_{s, t}^+)-1$. To verify this observation, let us take an arbitrary set $U\supsetneq A_T$ and show that $\Delta(U)\geq |A_T|+1$. First, observe that the statement is trivial if $U\cap V(K_{s,t})=\emptyset$ since in this case $U$ is an independent set and $\Delta(U)=|U|$. Hence, assume that $U\cap V(K_{s,t})\neq \emptyset$. Now note that it suffices to show $\Delta(U)\geq |A_T|+1$ only for sets $U$ completely contained within $A_T\cup V(K_{s, t})$. Once the statement is shown for such sets $U$, adding vertices outside $A_T\cup V(K_{s, t})$, which all have degree exactly 1 in $K_{s, t}^+$, cannot decrease the deficiency of $U$. Hence, we focus on sets $U$ with $A_T\subsetneq U\subseteq A_T\cup V(K_{s, t})$. The number of edges of $K_{s, t}^+$ spanned by such a set $U$ is precisely the number of edges of $T$ spanned by the intersection $U\cap V(K_{s, t})$, i.e. $e(K_{s, t}^+[U])=e(T[U\cap V(K_{s, t})])$. Since $T$ is a tree, we must have $e(T[U\cap V(K_{s, t})])\leq |U\cap V(K_{s, t})|-1$ and so 
\[\Delta(U)=|A_T|+|U\cap V(K_{s, t})|-e(K_{s, t}^+[U])\geq |A_T|+|U\cap V(K_{s, t})|-|U\cap V(K_{s, t})|+1=|A_T|+1.\]

Hence, to construct two disjoint good sets in $K_{s, t}^+$ of size $\Delta(K_{s, t}^+)-1=s+t-1$, it suffices to take two edge-disjoint spanning trees of $K_{s, t}$, which is certainly possible as the condition $st\geq 8(s+t)$ implies $s, t\geq 9$ (verifying (i) of Definition~\ref{defn:eligible}). Of course, $K_{s, t}^+$ contains two distinct additional vertices besides these good sets (verifying (ii) of Definition~\ref{defn:eligible}). Further, the vertices of degree more than $1$ in $K_{s, t}^+$ are precisely the initial vertices of $K_{s, t}$ and we have at most $s+t\leq st/4-\Delta(K_{s, t}^+)$ such vertices by the assumption that $st\geq 8(s+t)$. Moreover, every edge of $K_{s, t}^+$ contains exactly one vertex of degree 1 and $K_{s,t}^+$ has no isolated vertex (verifying (iii) of Definition~\ref{defn:eligible}). Finally, since every edge of $K_{s, t}^+$ contains a distinct vertex of degree $1$ and two vertices of $V(K_{s, t})$, any set of $e$ edges of $K_{s, t}^+$ must contain at least $e+2$ distinct vertices. This shows that there are no sets $U\subset V(K_{s, t}^+)$ of deficiency $\Delta(U)\leq 1$ and size $|U|\geq 2$ (verifying (iv) of Definition~\ref{defn:eligible}). Hence, $K_{s, t}^+$ is eligible.
\end{proof}

\subsection{Iteration}\label{subsec:iteration}

We begin this section by showing that if we have many copies of a smaller hypergraph $F_j$ in $\cH$, then we can either glue them together to produce many copies of a larger hypergraph $F_{j+1}$ or  we find an alternative structure in $\cH$ (as mentioned in the proof overview). We now define this structure.

\begin{definition}
Let $F$ be a hypergraph and let $r$ be a positive integer. A hypergraph $\tilde F$ is an \textit{$(r, F)$-sunflower} if there exists a set $U\subsetneq V(F)$ with $\Delta(U)\geq \Delta(F)$ and $r$ embeddings $\varphi_1, \dots, \varphi_r:V(F)\to V(\tilde F)$ satisfying the following two conditions:
\begin{itemize}
    \item[(i)] the embeddings $\varphi_1, \dots, \varphi_r$ cover $\tilde F$, i.e. $\bigcup_{i=1}^r \varphi_i(V(F))=V(\tilde F)$, and 
    \item[(ii)] for different indices $1\leq i<j\leq r$ we have $\varphi_i(u)=\varphi_j(v)$ if and only if $u=v\in U$. In particular, the sets $\varphi_i(V(F)\backslash U)$ and $\varphi_j(V(F)\backslash U)$ are disjoint when $i\neq j$.
\end{itemize}
\end{definition}

The main reason sunflowers are useful for us is that the deficiency of $\tilde F$ is bounded by the deficiency of $F$. Indeed, $v(\tilde F)=|U|+r(v(F)-|U|)$ and $e(\tilde F)=e(F[U])+r(e(F)-e(F[U]))$, so the deficiency of $\tilde F$ can be bounded as follows. $\Delta(\tilde F)=v(\tilde F)-e(\tilde F)=\Delta(F)+(r-1)(\Delta(F)-\Delta(U))\leq \Delta(F)$.

The following lemma, whose proof uses ideas from the proof of Lemma 3.1 in \cite{CGLS23}, shows that any hypergraph $\cH$ with many copies of $F$ contains either an $(r, F)$-sunflower or many copies of $F'$ (obtained by gluing copies of $F$). In the proof of Theorem~\ref{thm:power saving} we will mostly use this lemma with $m=2$.

\begin{lemma}\label{lemma:iteration step}
Let $r, m\geq 2$ be fixed integers and let $F$ be an eligible hypergraph with deficiency $\Delta(F)=k$. Then, there exists a hypergraph $F'$ satisfying $\Delta(F')=k+m-1$, $e(F')=m\cdot  e(F)$ and a positive constant $n_0=n_0(r,m,F)$ with the following property: for all $\eps\leq 1/2$ and all hypergraphs $\cH$ on $n>n_0$ vertices containing $\Omega(n^{k-\eps})$ copies of $F$, either
\begin{itemize}
    \item $\cH$ contains an $(r, F)$-sunflower, or
    \item $\cH$ contains $\Omega(n^{k+m-1-m\eps})$ copies of $F'$.
\end{itemize}
Moreover, if $m=2$, the resulting graph $F'$ is also eligible.
\end{lemma}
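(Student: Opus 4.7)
The plan is to construct $F'$ explicitly by gluing $m$ copies of $F$ along the good set $A$, count $m$-tuples of embeddings of $F$ in $\cH$ that agree on $A$ via convexity, and apply a dichotomy: either many such tuples assemble into copies of $F'$, or we extract an $(r,F)$-sunflower by iterated pigeonhole on the overlap pattern. The argument is cleanest for $m=2$; the general case needs an additional case split on $\max_{\vx}N_{\vx}$. To build $F'$, take a good set $A\subset V(F)$ of size $k-1$, form $m$ vertex-disjoint copies of $F$, identify their copies of $A$ via a fixed bijection, and call the result $F'$. Since $A$ is independent, $e(F')=m\cdot e(F)$ and $v(F')=mv(F)-(m-1)(k-1)$, so $\Delta(F')=k+m-1$; for $m=2$ this is exactly the construction in Lemma~\ref{lemma:maintaining eligibility}, which gives eligibility of $F'$. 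Let $N_{\vx}$ denote the number of embeddings $\phi:V(F)\hookrightarrow V(\cH)$ with $\phi(A)=\vx$ for a fixed ordering of $A$. Then $\sum_{\vx}N_{\vx}=\Omega(n^{k-\eps})$ over $\Theta(n^{k-1})$ tuples, and the power-mean inequality gives $\sum_{\vx}N_{\vx}^{m}\geq\Omega(n^{k+m-1-m\eps})$, which counts ordered $m$-tuples of embeddings of $F$ agreeing on $A$.

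Call such an $m$-tuple \emph{good} if its images pairwise intersect exactly in $\phi_1(A)$; good tuples give copies of $F'$. If at least half are good we obtain $\Omega(n^{k+m-1-m\eps})$ copies of $F'$ and are done. Otherwise $\Omega(n^{k+m-1-m\eps})$ tuples are \emph{bad}, each admitting indices $i<j$ and $u,v\in V(F)$ with $\phi_i(u)=\phi_j(v)$ and $(u,v)\notin\{(a,a):a\in A\}$; pigeonholing over the finitely many such quadruples, we fix $(i,j,u,v)=(1,2,u_0,v_0)$. Writing $P_{u,v,\vx}$ for the number of pairs $(\phi_1,\phi_2)$ of embeddings with $\phi_1|_A=\phi_2|_A=\vx$ and $\phi_1(u)=\phi_2(v)$, Cauchy--Schwarz in the fibre over the common value $w=\phi_1(u_0)=\phi_2(v_0)$ gives
\[P_{u_0,v_0,\vx}\;\leq\;\sqrt{P_{u_0,u_0,\vx}\,P_{v_0,v_0,\vx}},\]
converting ``cross'' overlaps to ``diagonal'' ones. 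For $m=2$, combining this with a second Cauchy--Schwarz in $\vx$ and a third on the common $U_1:=A\cup\{u_0\}$-image produces $\Omega(n^{1-\eps})$ embeddings of $F$ sharing a common $U_1$-image. For $m\geq 3$ the same inequalities carry an extra weight $N_{\vx}^{m-2}$ from the free coordinates $\phi_3,\dots,\phi_m$; we handle this by splitting on $\max_{\vx}N_{\vx}$, either pulling the weight out when the max is comparable to the average $n^{1-\eps}$, or, when some fibre $\vx^*$ is abnormally large, running the dichotomy within this fibre.

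Given $\Omega(n^{1-\eps})$ embeddings of $F$ agreeing on some $U\supseteq U_1$ with a common $U$-image, either a greedy choice picks $r$ of them with pairwise disjoint complements --- which form an $(r,F)$-sunflower because $U\supsetneq A$ and $A$ being good force $\Delta(U)\geq|A|+1=k=\Delta(F)$ --- or a standard greedy/pigeonhole argument produces a vertex of $V(\cH)\setminus\phi(U)$ contained in the complements of $\Omega(n^{1-\eps})$ of our embeddings; pigeonholing on the preimage in $V(F)\setminus U$ then gives $w'\in V(F)\setminus U$ with $\Omega(n^{1-\eps})$ embeddings mapping $w'$ to a single common $w\in V(\cH)$, and we enlarge $U$ to $U\cup\{w'\}$ before iterating. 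Since $|U|$ is bounded by $v(F)$, the process terminates in a sunflower. The main obstacle is the Cauchy--Schwarz bookkeeping, in particular the $m\geq 3$ absorption of the weight $N_{\vx}^{m-2}$, along with ensuring $n_0$ is large enough that the constant-factor losses at each pigeonhole stage keep the count comfortably above $n^{1-\eps}$.
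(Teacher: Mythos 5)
Your overall plan has the right skeleton (construct $F'$ by gluing $m$ copies of $F$ along the good set $A$, count $m$-tuples agreeing on $A$, apply a dichotomy between producing $F'$ and extracting a sunflower), and your greedy/pigeonhole sunflower extraction is a perfectly valid substitute for the paper's use of the Erd\H{o}s--Rado sunflower lemma. However, you omit a crucial step that the paper uses and that makes the whole dichotomy clean: a random partition of $V(\cH)$ into $v(F)$ parts, keeping only ``proper'' copies of $F$ (those respecting the partition). This ensures that for any two such copies $\varphi_1,\varphi_2$, an overlap $\varphi_1(u)=\varphi_2(v)$ forces $u=v$; cross-overlaps never arise. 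The paper then defines an auxiliary graph $\cG$ on the proper copies, with adjacency meaning ``agree on a set strictly containing $A$'', and splits on whether $\cG$ has a vertex of large (constant, depending on $r,F$) degree: a high-degree vertex feeds Erd\H{o}s--Rado to produce a sunflower, while low max-degree yields a large independent set $I$ in which any $m$ embeddings agreeing on $A$ automatically glue into a copy of $F'$, so the convexity count goes through uniformly in $m$.

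Because you skip the partition, you must handle cross-overlaps via Cauchy--Schwarz. For $m=2$ your argument is essentially sound once one checks that the degenerate diagonal cases ($u_0\in A$ or $v_0\in A$) are vacuous (they are, by injectivity of embeddings), and $P_{u_0,v_0,\vx}\le\sqrt{P_{u_0,u_0,\vx}P_{v_0,v_0,\vx}}$ together with another Cauchy--Schwarz in $\vx$ does produce $\Omega(n^{1-\eps})$ embeddings agreeing on a $k$-set $U_1\supsetneq A$. But for $m\ge 3$ there is a genuine gap, which you yourself flag: the bad $m$-tuple count carries an extra weight $N_\vx^{m-2}$, and your proposed fix of splitting on $\max_\vx N_\vx$ does not obviously close it. If $M_0:=\max_\vx N_\vx$ is only moderately above $T/n^{k-1}$ (say $M_0\approx n^{1-\eps+\delta}$ for small $\delta>0$), then ``running the dichotomy within that fibre'' is not enough: a single fibre $\vx^*$ contributes at most $N_{\vx^*}^m$ copies of $F'$, which need not reach $n^{k+m-1-m\eps}$, while the common intersection of embeddings in the fibre is only $A$, whose deficiency is $k-1<\Delta(F)$, so no valid sunflower comes out either. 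Incorporating the random partition removes this difficulty wholesale (no cross-overlaps, and the independent-set dichotomy handles all $m$ at once), so I would strongly recommend adopting that step; with it, your proposal essentially converges to the paper's proof, with the greedy/pigeonhole replacing Erd\H{o}s--Rado.
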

\begin{proof}
We take $F'$ to be the hypergraph obtained by gluing $m$ copies of $F$ along a good set $A\subset V(F)$ of size $k-1$. In other words, to obtain $F$ we start from $m$ disjoint copies $F^{(1)}, \dots, F^{(m)}$ and for each vertex $v\in A$ we identify the $m$ instances of that vertex with each other. For $m=2$, this construction is identical to the construction from Lemma~\ref{lemma:maintaining eligibility} and therefore the resulting hypergraph $F'$ is indeed eligible. 

We now focus on proving the main statement of the lemma. Throughout the proof, we identify the vertex set of $F$ with $[v(F)]$, where the set $A \subset V(F)$ is identified with $[k-1]$, and we think of copies of $F$ in $\cH$ as embeddings $\varphi:V(F)\to V(\cH)$. As a first step, we randomly partition vertices of $\cH$ into $v(F)$ parts $S_1, \dots, S_{v(F)}$. We call a copy of $F$ \textit{proper} if the corresponding embedding $\varphi:[v(F)]\to V(\cH)$ satisfies $\varphi(i)\in S_i$ for all $i\in V(F)$. The expected number of proper copies of $F$ is $v(F)^{-v(F)}$ times the total number of copies of $F$ in $\cH$. Hence, we can fix a partition $S_1, \dots, S_{v(F)}$ with $\Omega(n^{k-\eps})$ proper copies of $F$. Let us denote the collection of these copies by $\cF$. For two embeddings $\varphi_1, \varphi_2\in \cF$, we define their intersection $U(\varphi_1, \varphi_2)=\{i\in V(F):\varphi_1(i)=\varphi_2(i)\}$. The random partitioning we performed ensures that any overlap between two copies $\varphi_1, \varphi_2\in \cF$ can only happen on the vertices playing the same role in both copies, i.e. we can never have $\varphi_1(i)=\varphi_2(j)$ for $i\neq j$. Formally, for any $\varphi_1, \varphi_2\in \cF$ we have $\varphi_1(V(F))\cap \varphi_2(V(F))=\varphi_1(U(\varphi_1, \varphi_2))$.

We now define an auxiliary graph $\cG$ whose set of vertices is $\cF$. We define two copies $\varphi_1, \varphi_2\in \cF$ to be adjacent in $\cG$ if $A\subsetneq U(\varphi_1, \varphi_2)$. We now have two cases - either the auxiliary graph $\cG$ has a vertex of degree at least $v(F)!(2r)^{v(F)}$ or all vertices of $\cG$ have degree less than $v(F)!(2r)^{v(F)}$.

Suppose we are in the first case and we have an embedding $\varphi_0:V(F)\to V(\cH)$ with at least $v(F)!(2r)^{v(F)}$ neighbours in $\cG$. Then, by the pigeonhole principle one can find a set $U_0$ such that at least $v(F)!r^{v(F)}$ embeddings $\varphi\in N_{\cG}(\varphi_0)$ satisfy $U(\varphi, \varphi_0)=U_0$. We denote the set of these embeddings by $\cF_0$. Let us now recall the Erd\H{o}s-Rado sunflower lemma, which we plan to apply to the vertex sets of the copies of $F$ in $\cF_0$.

\begin{theorem*} [Erd\H{o}s-Rado sunflower lemma, \cite{ER60}]

Let $\cS$ be a family of sets, where each set has cardinality $k$. If $|\cS|\geq k!(r-1)^k$, there exist $r$ distinct sets $S_1, \dots, S_r\in \cS$ which satisfy $S_i\cap S_j=S_1\cap \dots\cap S_r$ for any $1\leq i< j\leq r$.
\end{theorem*}

\noindent
We apply the Erd\H{o}s-Rado sunflower lemma to the family $\cS=\{\varphi(V(F))|\varphi\in \cF_0\}$. Note that each set in this family has cardinality $v(F)$. Since $|\cS|\geq v(F)!r^{v(F)}$, we obtain a set of $r$ different embeddings $\varphi_1, \dots, \varphi_r$ and a set $U\subsetneq V(F)$ such that $U(\varphi_i, \varphi_{j})=U$ for all $i, j\in [r]$, $i\neq j$. We claim that the union of these $r$ copies of $F$ forms an $(r, F)$-sunflower contained in $\cH$. Since $U(\varphi_0, \varphi_i)\cap U(\varphi_0, \varphi_{j})\subseteq U(\varphi_i, \varphi_{j})$ for all $i, j\in [r]$, we see that $U_0\subseteq U$. By the definition of the adjacency relation in $\cG$, we have $A\subsetneq U_0\subset U$ and therefore the set $U$ has deficiency $\Delta(U)\geq |A|+1=\Delta(F)$. Hence,  $\bigcup_{i=1}^r \varphi_i(F)$ really is an $(r, F)$-sunflower.

Now, we consider the second case, in which all vertices of $\cG$ have degree less than $v(F)!(2r)^{v(F)}$. In this case, $\cG$ contains an independent set $I$ of size at least $\frac{|V(\cG)|}{v(F)!(2r)^{v(F)}}=\Omega(n^{k-\eps})$. For each $(k-1)$-tuple $\mathbf{v}=(v_1, \dots, v_{k-1})\in S_1\times\dots\times S_{k-1}$ representing the vertices of $A$, we define $\cF_{\mathbf{v}}$ to be the set of embeddings $\varphi\in I$ having $\varphi(i)=v_i$ for all $i\in [k-1]$. Note that any two embeddings $\varphi_1, \varphi_2\in \cF_{\mathbf{v}}$ have $U(\varphi_1, \varphi_2)=A=[k-1]$ since they are not adjacent in the graph $\cG$. Therefore, for any $m$ distinct embeddings $\varphi_1, \varphi_2, \ldots, \varphi_m \in \cF_{\mathbf{v}}$, the subgraph $\varphi_1(F)\cup \ldots \cup \varphi_m(F)$ of $\cH$ is a copy of $F'$. In particular, this shows that there are $\Omega\left(\binom{|\cF_{\mathbf{v}}|}{m}\right)$ copies of $F'$ containing the $(k-1)$-tuple $\mathbf{v}$. Since the copies of $F'$ corresponding to different $(k-1)$-tuples $\mathbf{v}$ are different, the number of copies of $F'$ in $\cH$ is at least $\sum_{\substack{\mathbf{v}\in S_1\times\cdots\times S_{k-1}\\|\cF_{\mathbf{v}}|\geq m}} \binom{|\cF_{\mathbf{v}}|}{m}$. Since $x\mapsto \binom{x}{m}$ is a convex function when $x\geq m$, applying Jensen's inequality gives
\begin{align*}
    \sum_{\substack{\mathbf{v}\in S_1\times\cdots\times S_{k-1}\\|\cF_{\mathbf{v}}|\geq m}} \binom{|\cF_{\mathbf{v}}|}{m}&\geq \prod_{i=1}^{k-1}|S_i|\binom{(\prod_{i=1}^{k-1}|S_i|)^{-1}\sum_{|\cF_{\mathbf{v}}|\geq m} |\cF_{\mathbf{v}}|}{m} \\&\geq \frac{\left(\sum_{\mathbf{v}} |\cF_{\mathbf{v}}|-mn^{k-1}\right)^m}{m^m (\prod_{i=1}^{k-1}|S_i|)^{m-1}}
    \geq \Omega\left(\frac{n^{mk-m\eps}}{n^{(m-1)(k-1)}}\right)= \Omega\left(n^{m+k-1-m\eps}\right),
\end{align*}
where the last inequality comes from the fact that the sum $\sum_{\mathbf{v}} |\cF_{\mathbf{v}}|$ simply counts the number of proper copies of $F$ in $\cH$ and that each of the sets $S_1, S_2, \ldots, S_k$ has size at most $n$. Therefore, $\cH$ contains $\Omega(n^{k+m-1-m\eps})$ copies of $F'$, as needed. This completes the proof of the lemma.
\end{proof}

\begin{lemma}\label{lemma:sunflower}
Suppose that $F$ is an eligible hypergraph and suppose that a hypergraph $\cH$ contains an $(r, F)$-sunflower (for some $r$) with at least $e$ edges. If $e(F)\leq e/2$, then $\cH$ contains an $(e+\Delta(F), e)$-configuration.
\end{lemma}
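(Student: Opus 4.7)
My plan is to combine a sub-sunflower argument with a pendant-pruning step made possible by the eligibility of $F$.

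Let $U\subsetneq V(F)$ be the core of the sunflower and set $p\coloneqq e(F)-e(F[U])$, the number of edges each petal contributes outside $F[U]$. I choose the smallest positive integer $r'$ such that $e(F[U])+r'p\geq e$, and let $\tilde G$ be the sub-sunflower of $\tilde F$ consisting of $U$ together with $r'$ of the $r$ petals; such an $r'$ exists because $\tilde F$ itself has at least $e$ edges. By the minimality of $r'$, we have $e(\tilde G)-e<p\leq e(F)\leq e/2$. A direct calculation of the deficiency, using $\Delta(U)\geq \Delta(F)$, gives
\[
\Delta(\tilde G)=\Delta(F)-(r'-1)(\Delta(U)-\Delta(F))\leq \Delta(F),
\]
so that $v(\tilde G)\leq e(\tilde G)+\Delta(F)$.

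Next, the plan is to trim $\tilde G$ down to exactly $e$ edges by iteratively removing vertices of degree one from $\tilde G$ together with their unique incident edge. Each such removal decreases both the vertex count and the edge count by one, and therefore preserves the deficiency. After $q\coloneqq e(\tilde G)-e$ such operations, the resulting sub-hypergraph has exactly $e$ edges and at most $v(\tilde G)-q=e+\Delta(\tilde G)\leq e+\Delta(F)$ vertices, which is the desired $(e+\Delta(F),e)$-configuration.

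The only point that requires work is the justification that $\tilde G$ really admits at least $q$ such pendant-removals. Here I plan to use condition (iii) of eligibility crucially: $F$ has no isolated vertices, at most $e(F)/4-\Delta(F)$ vertices of degree greater than one, and each edge contains at most one vertex of degree one. Hence $F$ contains at least $v(F)-(e(F)/4-\Delta(F))=3e(F)/4+2\Delta(F)$ degree-one vertices. These induce pendants of $\tilde G$ in two ways: every degree-one vertex of $F$ lying outside $U$ remains degree-one in each of the $r'$ petals, and every degree-one vertex of $F$ lying inside $U$ whose unique $F$-edge lies in $F[U]$ is also degree-one in $\tilde G$. Together with the further pendants created by cascading during the iterative procedure, this yields a supply comfortably larger than the requirement $q<e(F)\leq e/2$, so the trimming procedure can indeed be carried out.

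The main obstacle is precisely this last step: to verify rigorously that the $q$ iterative degree-one removals never get blocked before reaching $e$ edges, i.e.\ that the $2$-core of $\tilde G$ has at most $e$ edges. This requires a careful bookkeeping of how pendants inside $U$ (affecting the shared edges of $F[U]$) and pendants outside $U$ (affecting petal-specific edges) interact during the cascading pruning, relying on the specific pendant-abundance guaranteed by condition (iii) together with the bound $e(F)\leq e/2$.
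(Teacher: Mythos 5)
You have correctly identified the location of the gap, and it is a genuine one that cannot be closed by ``careful bookkeeping'' alone: the supply of pendants can be zero. Consider a core $U$ consisting of all degree-one vertices of $F$ (by condition (iii) no edge has two of them, so $U$ is independent and $\Delta(U)=|U|>\Delta(F)$, a legitimate gluing set). Then $F[U]$ is edgeless, so every degree-one vertex of $F$ has its unique edge touching $P$ and therefore has degree $r'\geq 2$ in $\tilde G$; meanwhile every vertex of $P$ already has degree $\geq 2$ in $F$. Hence $\tilde G$ has \emph{no} degree-one vertices at all and your trimming never starts, even though $q>0$ is perfectly possible. The paper handles this with a dichotomy that you are missing: setting $v_U$ (degree-one vertices of $U$ whose edge lies in $F[U]$) and $v_P$ (degree-one vertices of $P$), it shows that either $v_U+r'v_P\geq |P|$, in which case there are enough pendants, or $v_U+r'v_P<|P|$, in which case one can deduce $e_P\geq 2|P|$ and hence that $v(\tilde G)\leq e+\Delta(F)$ \emph{already}, so no trimming is needed. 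Your proposal has no branch covering the second case because you insist on reaching exactly $e$ edges.

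Two further points. First, the ``cascading'' worry you raise is a red herring once the argument is set up correctly: condition (iii) is preserved in the sunflower (each edge of $\tilde G$ contains at most one degree-one vertex), so deleting a \emph{set} of $k$ degree-one vertices of the original $\tilde G$ removes exactly $k$ edges, and no iterative/2-core analysis is needed. Second, the paper trims to exactly $e+\Delta(F)$ \emph{vertices} rather than exactly $e$ edges; this is a strictly weaker goal (the number of required removals $k=v(\tilde G)-e-\Delta(F)$ satisfies $k\leq q$ and $k\leq |P|$, which matches the supply bound), and it is what makes the dichotomy close up. Targeting exactly $e$ edges over-constrains the problem, since an $(e+\Delta(F),e)$-configuration only needs \emph{at least} $e$ spanned edges.
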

\begin{proof}
By our assumption, $\cH$ contains an $(r, F)$-sunflower with at least $e$ edges. Since it is a sunflower, it has deficiency at most $\Delta(F)$. However, this sunflower may have too many edges and vertices, so it does not immediately give us an $(e+\Delta(F), e)$ configuration. Thus, the idea is to remove some vertices of degree $1$ from the copies of $F$ forming this sunflower and thus reduce the number of edges and vertices of the sunflower, without changing the deficiency in the process.

Suppose the copies of $F$ forming this sunflower correspond to embeddings $\varphi_1, \dots, \varphi_r$ and suppose that these copies are glued over a set $U\subset V(F)$ with $\Delta(U)\geq \Delta(F)$. We call the set $U$ the \textit{core} and  for each copy of $F$ forming this sunflower, we call the set $P=V(F)\backslash U$ a \textit{petal}. Further, let $e_U=e(F[U])$ denote the number of edges of $F$ completely contained in the core and let $e_P=e(F)-e(F[U])$ denote the number of edges incident to a vertex in the petal $P$. The assumption that $\Delta(U)\geq \Delta(F)$ implies that $e_P\geq |P|$, which means that every petal adds at least as many edges as vertices. Note that the sunflower constructed using only the copies $\varphi_1, \dots, \varphi_p$ would have $p|P|+|U|$ vertices and $pe_P+e_U$ edges. Let us fix $p$ to be the smallest number of petals one needs to attach to the core to get at least $e$ edges in the sunflower, i.e. $p=\lceil\frac{e-e_U}{e_P}\rceil$. Note that we may assume that $p\geq 3$ since $p=2$ implies that the sunflower has exactly $e$ edges (because we assumed $e(F) \le e/2$) and hence it represents an $( e+\Delta(F), e)$-configuration.

The total number of vertices in this sunflower is $p|P|+|U|$ and thus if $p|P|+|U|\leq e+\Delta(F)$, we are done since we have found an $(e+\Delta(F), e)$-configuration. Otherwise, $p|P|+|U|> e+\Delta(F)$, in which case our goal is to remove some vertices of degree 1 to obtain exactly $e+\Delta(F)$ vertices. Then, since the deficiency is still at most $\Delta(F)$, this gives us the desired $(e+\Delta(F), e)$-configuration. We claim that it is always sufficient to remove at most $|P|$ vertices of degree $1$ in order to bring the number of vertices to $e+\Delta(F)$. Indeed, note that although the number of vertices of the sunflower formed by $\varphi_1, \dots, \varphi_p$ is larger than $e+\Delta(F)$, it cannot be much larger. Namely, by the minimality of $p$ we have $(p-1) e_P+e_U\leq e$ and so 
\begin{align*}
    p|P|+|U|&= (p-1)|P|+ v(F)\leq (p-2)e_P + |P| + e(F)+\Delta(F)\\
    &= (p-1)e_P+e_U+|P|+\Delta(F)\leq e+\Delta(F)+|P|.
\end{align*}

Thus, it suffices to show that the sunflower consisting of $\varphi_1, \dots, \varphi_p$ has at least $|P|$ vertices of degree $1$. Let $v_U$ denote the number of vertices $v$ of $U$ such that $v$ has degree $1$ in $F$ and the edge incident to $v$ does not contain any vertices of $P$, and let $v_P$ be the number of vertices of $P$ which have degree $1$. Then the number of vertices of degree $1$ in the sunflower consisting of $\varphi_1,\dots,\varphi_p$ is equal to $v_U+pv_P$. Our goal is to show that $v_U+pv_P\geq |P|$. Suppose this is not the case and we have $v_U+pv_P<|P|$. Since $F$ is eligible, at most $e(F)/4$ vertices in the hypergraph $F$ can have degree more than $1$, so we have $v_P\geq |P|-e(F)/4$. We know that $p\geq 3$, so $v_U <  |P|-3v_P$.

Since $F$ is eligible, at least $3e(F)/4$ vertices in $F$ have degree $1$ and moreover, every edge of $F$ contains at most one vertex of degree $1$. So there are at least $3e(F)/4$ edges in $F$ incident to a vertex of degree $1$. Out of these, only $v_U$ are completely contained in $U$. Thus, the number of edges incident to $P$ is at least \[e_P\geq 3e(F)/4-v_U\geq 3e(F)/4-|P|+3v_P\geq 3e(F)/4-|P|+3(|P|-e(F)/4)=2|P|.\] 

In other words, every petal adds twice as many edges as vertices. We will show that this contradicts our assumption that $p|P| + |U| > e + \Delta(F)$. More precisely, we have 
\begin{align*}
p|P|+|U|\leq &\left(\frac{e-e_U}{e_P}+1\right)|P| +|U|\leq \frac{e}{e_P}|P|+|P|+|U|\\
\leq & \frac{e}{e_P}|P|+v(F)\leq \frac{e}{2}+e(F)+\Delta(F)\leq e+\Delta(F),
\end{align*}
since $e(F) \le e/2$. This completes the proof of the lemma.
\end{proof}

Now we have all the necessary ingredients to prove Theorem~\ref{thm:power saving}.

\begin{proof}[Proof of Theorem~\ref{thm:power saving}.]
Throughout the proof, we will assume that we have a linear hypergraph $\cH$ with $\Omega(n^{2-\eps})$ edges, where $\eps<e^{-5}$ and $e$ is the required number of edges in the final configuration. Note that the assumption that $\cH$ is linear is without any loss of generality, since if any of the pairs $(x, y)\in V(\cH)^2$ was in at least $e$ edges then we would have an $(e+2, e)$-configuration. Hence, one can keep a positive constant (dependent only on $e$) fraction of all edges in $\cH$ while ensuring that no pair $(x, y)\in V(\cH)^2$ appears in more than one edge.

In case $e$ is small, say $e<512$, we can use Lemma~\ref{lemma:start} to find many copies of $K_{\lceil \sqrt{e}\rceil, \lceil \sqrt{e}\rceil}^{+}$ in $\cH$. The deficiency of this hypergraph is $2\lceil\sqrt{e}\rceil\leq \lfloor \log_2 e\rfloor +38$ as long as $e<512$, which is sufficient for the proof of our theorem in this case.

For $e\geq 512$, we apply Lemma~\ref{lemma:start} with $s=t=16$ to find an eligible hypergraph $F_0$ such that $\cH$ contains $\Omega(n^{k_0-s^3 t \eps})$ copies of $F_0$, where $k_0=\Delta(F_0) = 32$. Note that $e(F_0) = 256$. Let us now fix the parameters $r=e$ and $\ell=\lfloor \log_2 e/e(F_0)\rfloor-1$ (where $\ell\geq 0$ since $e\geq 512$). Observe that the parameters were chosen so that $2^\ell s^3t \eps\leq 1/2$. Having fixed these parameters, we will show how to find a $(v, e)$-configuration in $\cH$ with $v-e\leq \lfloor \log_2 e\rfloor +26$.

The idea is to iteratively apply Lemma~\ref{lemma:iteration step} with $m=2$ to construct a sequence of eligible hypergraphs $F_0, F_1, F_2, \dots, F_\ell$ with the property that either $\cH$ contains an $(r, F_j)$-sunflower for some $0\leq j\leq \ell$ or $\cH$ contains $\Omega(n^{k_0+j-2^j \eps'})$ copies of $F_j$ for each hypergraph $F_j$ in this sequence, where $\eps'=s^3t\eps$. Indeed, the construction of this sequence is given by Lemma~\ref{lemma:iteration step}, where we set $F_{j+1}=F_j'$ for every $0 \le j \le \ell-1$. Since $e(F_j')=2e(F_j)$ and $\Delta(F_j')=\Delta(F_j)+1$, this construction satisfies $e(F_j)=2^je(F_0)$ and $\Delta(F_j)=\Delta(F_0)+j$ for all $0 \le j \le \ell$.
If we assume that $\cH$ does not contain an $(r, F_j)$-sunflower for any $0\leq j\leq \ell$, one can show by induction on $j$ that $\cH$ contains $\Omega(n^{k_0+j-2^j \eps'})$ copies of $F_j$ for each $0\leq j\leq \ell$. The base case is given by Lemma~\ref{lemma:start}, since $\cH$ contains $\Omega(n^{k_0-\eps'})$ copies of $F_0$. For $j\ge 1$, from the assumption that $\cH$ contains $\Omega(n^{k_0+j-1-2^{j-1}\eps'})$ copies of $F_{j-1}$ and that $\cH$ contains no $(r, F_{j-1})$-sunflower, Lemma~\ref{lemma:iteration step} allows us to deduce that $\cH$ contains $\Omega(n^{k_0+j-2^j \eps'})$ copies of $F_j$, allowing us to continue the induction. Recall that the parameters were set such that $2^j \eps'\leq 2^\ell s^3 t\eps \leq \frac{1}{2}$ and hence Lemma~\ref{lemma:iteration step} can be applied for all $j\leq \ell$.

If $\cH$ contains an $(r, F_j)$-sunflower for some $0\leq j\leq \ell$, then Lemma~\ref{lemma:sunflower} shows that $\cH$ contains an $(e+\Delta(F_j), e)$-configuration. Indeed, the assumptions of Lemma~\ref{lemma:sunflower} are satisfied since $e(F_j)\leq 2^\ell e(F_0)\leq \frac{e}{2e(F_0)}e(F_0)=e/2$ and $r=e$ (which implies that the sunflower has at least $e$ edges, since $F_j$ has no isolated vertex). Moreover, note that finding an $(e+\Delta(F_j), e)$-configuration in $\cH$ would indeed suffice to prove Theorem~\ref{thm:power saving} since
\[\Delta(F_j)=\Delta(F_0)+j\leq 32+\ell\leq 32+\lfloor \log_2 e/e(F_0) \rfloor -1= \lfloor \log_2 e\rfloor +23.\]

The only case left to consider is when $\cH$ does not contain an $(r, F_j)$-sunflower for any $0\leq j\leq \ell$. Applying Lemma~\ref{lemma:iteration step} to $F_\ell$ with $m=4$ and using the fact we have no $(r, F_\ell)$-sunflower, we arrive at the conclusion that $\cH$ contains a hypergraph $F_\ell'$ obtained by gluing $4$ copies of $F_\ell$ along a good set $A\subset V(F_\ell)$ of size $\Delta(F_\ell)-1$. The hypergraph $F_\ell'$ satisfies $\Delta(F_\ell')=\Delta(F_\ell)+3$ and $e(F_\ell')=4e(F_\ell)\in [e, 2e]$. Since $F_\ell'$ is formed by gluing $4$ copies of $F_\ell$ along a set of size $\Delta(F_\ell)-1$ and $F_\ell$ is eligible, the number of vertices of degree more than $1$ in $F_\ell'$ is at most $4(e(F_\ell)/4-\Delta(F_\ell))+\Delta(F_\ell)-1\leq e(F_\ell')/4$. Hence, $F_\ell'$ contains at least $v(F_\ell')-e(F_\ell')/4 = e(F_\ell')+\Delta(F_\ell')-e(F_\ell')/4 \ge e(F_\ell')/2$ vertices of degree $1$, so one can remove $e(F_\ell')-e\leq e(F_\ell')/2$ vertices of degree $1$ from $F_\ell'$ without changing the deficiency of the configuration. In this way, for any $e \ge 512$, one also obtains an $(e+\Delta(F_\ell'), e)$-configuration with \[ \Delta(F_\ell')\leq \ell+3+\Delta(F_0)\leq 32+\lfloor \log_2 e/e(F_0)\rfloor+2= \lfloor \log_2 e\rfloor +26.\]
This completes the proof of Theorem~\ref{thm:power saving}.
\end{proof}

\section{Concluding remarks}\label{sec:concluding remarks}

In order to improve the coefficient of $\log_2 e$ in Theorem \ref{thm:power saving} (or to replace it with $o(\log_2 e)$), it seems that new ideas are needed. Our strategy was to find many copies of each of $F_0,F_1,\dots,F_{\ell}$, where $F_{i+1}$ is obtained by gluing two copies of $F_i$ along an independent set of size $\Delta(F_i)-1$. Another approach, whose natural limit also seems to be deficiency $\log_2 e$, is as follows. Similarly to our approach in this paper, we define a sequence $H_0,H_1,\dots,H_{\ell}$ of hypergraphs such that each of them has relatively low deficiency and we can find many copies of each of them in our host hypergraph. Also, $H_{i+1}$ is built from $H_i$ recursively, but differently from the way we built $F_{i+1}$. To be more precise, we fix an independent set $I$ of size $k:=\Delta(H_i)$ in $H_i$ and a complete $k$-partite $k$-uniform hypergraph $K$, and we attach a copy of $H_i$ to $K$ for each $e\in E(K)$ by identifying $I$ with $e$ (in a way that all other vertices in the copies of $H_i$ are distinct). Let us explain why we may expect to find many copies of these hypergraphs in our host hypergraph. Assume that we have already shown that the number of copies of $H_i$ in our host hypergraph is at least roughly $n^{\Delta(H_i)-e(H_i)\eps}=n^{k-e(H_i)\eps}$ (which is the expected number of copies in a random $3$-uniform hypergraph with $n^{2-\eps}$ edges). Define an auxiliary $k$-uniform hypergraph $\mathcal{K}$ in which hyperedges are the $k$-sets corresponding to $I$ in the copies of $H_i$ in our host graph. We expect at least $n^{k-e(H_i)\eps}$ edges in this hypergraph, so if $\eps$ is sufficiently small, then we indeed expect to find many copies of $\mathcal{K}$ in this auxiliary hypergraph, giving many (homomorphic) copies of $H_{i+1}$. This also shows why we cannot take $\mathcal{K}$ to be an arbitrary $k$-uniform hypergraph: this number of edges is only enough to find $k$-partite subgraphs. It is not hard to see that this approach will not lead to deficiency fewer than $\log_2 e$ for configurations with $e$ edges.

\section*{Acknowledgments} 
We are grateful to David Conlon and Lior Gishboliner for helpful comments.

\bibliographystyle{amsplain}


\begin{dajauthors}
\begin{authorinfo}[oliver]
  Oliver Janzer\\
  EPFL\\
  Lausanne, Switzerland\\
  oliver.janzer\imageat{}epfl\imagedot{}ch \\
  \url{https://people.epfl.ch/oliver.janzer}
\end{authorinfo}
\begin{authorinfo}[abhishek]
  Abhishek Methuku\\
  UIUC\\
  Urbana-Champaign, United States\\
  methuku\imageat{}illinois\imagedot{}edu \\
  \url{https://math.illinois.edu/directory/profile/methuku}
\end{authorinfo}
\begin{authorinfo}[aleksa]
  Aleksa Milojevi\'c\\
  ETH Z\"urich\\
  Z\"urich, Switzerland\\
  aleksa.milojevic\imageat{}math\imagedot{}ethz\imagedot{}ch\\
  \url{https://people.math.ethz.ch/~amilojevi}
\end{authorinfo}
\begin{authorinfo}[benny]
  Benny Sudakov\\
  ETH Z\"urich\\
  Z\"urich, Switzerland\\
  benjamin.sudakov\imageat{}math\imagedot{}ethz\imagedot{}ch\\
  \url{https://people.math.ethz.ch/~sudakovb}
\end{authorinfo}
\end{dajauthors}

\end{document}